\numberwithin{equation}{section}
\title[Weakly elliptic damping]{Weakly elliptic damping gives sharp decay}
\author{Lassi Paunonen}
\address{Mathematics and Statistics, Faculty of Information Technology and Communication Sciences, Tampere University, PO Box 692, 33101 Tampere, Finland}
\email[Lassi Paunonen]{lassi.paunonen@tuni.fi}
\author{Nicolas Vanspranghe}
\address{Mathematics and Statistics, Faculty of Information Technology and Communication Sciences, Tampere University, PO Box 692, 33101 Tampere, Finland}
\email[Nicolas Vanspranghe]{nicolas.vanspranghe@tuni.fi}
\author{Ruoyu P. T. Wang}
\address{Department of Mathematics, University College London, London, WC1H 0AY, United Kingdom}
\email[Ruoyu~P.~T.~Wang]{ruoyu.wang@ucl.ac.uk}
\date{}
\newcommand{\essinf}{\mathop {\rm ess\,inf}}
\newcommand{\esssup}{\mathop {\rm ess\,sup}}
\renewcommand{\supp}{\text{supp }}
\newcommand{\Ac}{\mathcal{A}}
\newcommand{\Hc}{\mathcal{H}}
\newcommand{\abs}[1]{\left|#1\right|}
\newcommand{\cim}{\operatorname{Im}}
\newcommand{\cre}{\operatorname{Re}}
\newcommand{\Hcd}{\dot{\Hc}}
\renewcommand{\ker}{\operatorname{Ker}}
\newcommand{\img}{\operatorname{Im}}
\begin{document}

\begin{abstract}
We prove that weakly elliptic damping gives sharp energy decay for the abstract damped wave semigroup, where the damping is not in the functional calculus. In this case, there is no overdamping. We show applications in linearised water waves and Kelvin--Voigt damping.
\end{abstract}

\maketitle

\section{Introduction}
\subsection{Motivation}\label{s11}
Let $\Delta\ge 0$ be the Laplace--Beltrami operator on a compact manifold $M$ of dimension $d\ge 1$ without boundary. There is recent interest in the study of decay of the damped water wave equation, linearised via paradifferential diagonalisation,
\begin{gather}\label{eq:5}
(\partial_t^2+\Delta^{\frac{1}{2}}_x+\Delta_x^{\frac{1}{4}} a(x) \Delta_x^{\frac{1}{4}}\partial_t)u(t,x)=0,\\
u(0,x)\in W^{\frac{1}{2}, 2}(M),\ \partial_t u(0, x)\in L^2(M)
\end{gather}
describes the evolution of a fluid interface in the gravity-capillary water wave system subject to an external pressure, studied in \cite{abz11,ala17b,ala18,abh18,amw23,kw23}. We define the energy of the solution to \eqref{eq:5} by
\begin{equation}
E(u,t)=\|\partial_t u\|_{L^2(M)}^2+\|\Delta^{\frac{1}{4}}u\|_{L^2(M)}^2.
\end{equation}
We want to understand the decay of energy, when the dissipation coefficient $a(x)\in L^\infty(M)$ may vanish on a measure zero set in $M$. Here the damping term $\Delta_x^{\frac{1}{4}} a(x) \Delta_x^{\frac{1}{4}}$, though not relatively compact, may still have some weak elliptic properties. This motivates us to study how elliptic damping gives sharp energy decay rates in generalised semigroup setting. As a corollary, in Example \ref{p:water3} we prove that the energy of \eqref{eq:5} decays exponentially when $a(x)$ degenerates fast near its zeros.
\begin{corollary}
Assume $(a(x))^{-1}\in L^{p}(M)$ for $p\in (d,\infty)$. Then there is $C>0$ that
\begin{equation}
E(u, t)\le e^{-Ct}E(u,0),
\end{equation}
uniformly in $t>0$ and $u$ satisfying \eqref{eq:5}. For $p\in (1,d]$, we have
\begin{equation}
E(u,t)^{\frac{1}{2}}\le C\langle t\rangle^{-\frac{p}{d-p}}(\|u(0,x)\|_{W^{1,2}(M)}+\|\partial_t u(0,x)\|_{W^{\frac{1}{2},2}(M)})
\end{equation}
uniformly in $t>0$ and $u$ solving \eqref{eq:5} with initial data in $W^{1,2}(M)\times W^{\frac{1}{2},2}(M)$. 
\end{corollary}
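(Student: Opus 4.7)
My plan is to recast \eqref{eq:5} into the paper's abstract damped-wave framework and verify the weak ellipticity hypothesis of the main theorem via a Hölder--Sobolev computation. Setting $A := \Delta^{1/2}$ and $B := \Delta^{1/4} a \Delta^{1/4}$, both nonnegative self-adjoint on $L^2(M)$, the equation reads $\ddot u + A u + B \dot u = 0$ with energy $E(u,t) = \|\dot u\|^2_{L^2} + \|A^{1/2} u\|^2_{L^2}$, and the associated semigroup lives on the energy space $\mathcal{H} = H^{1/2}(M) \times L^2(M) = D(A^{1/2}) \times L^2$. Since $a$ is a multiplication operator not commuting with $\Delta$, the damping $B$ is not in the functional calculus of $A$, which is exactly the setting of the main theorem.

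The heart of the argument is a coercivity estimate for $B$ in terms of $a^{-1} \in L^p$. By Cauchy--Schwarz applied to the pointwise factorisation $|\Delta^{1/4}v|^2 = (a^{1/2}|\Delta^{1/4}v|)(a^{-1/2}|\Delta^{1/4}v|)$,
\begin{equation}
\|\Delta^{1/4}v\|^4_{L^2} \leq \langle Bv, v\rangle \int_M a^{-1}|\Delta^{1/4}v|^2 \, dx.
\end{equation}
Bounding the last integral by Hölder with conjugate exponents $(p, p/(p-1))$ and the critical Sobolev embedding $H^{d/(2p)}(M) \hookrightarrow L^{2p/(p-1)}(M)$ on the compact $d$-manifold $M$ yields
\begin{equation}
\|v\|^4_{H^{1/2}} \leq C \|a^{-1}\|_{L^p} \, \langle Bv, v\rangle \, \|v\|^2_{H^{1/2 + d/(2p)}},
\end{equation}
a damping coercivity with a loss of $d/(2p)$ derivatives beyond the energy norm, which is precisely the form of the weakly elliptic hypothesis required by the main theorem.

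When $p > d$, the loss exponent $d/(2p) < 1/2$ can be absorbed by the $H^1$-control coming from the real part of the resolvent equation $(A - \lambda^2 + i\lambda B)u = F$, and the exponential-decay case of the main theorem gives the uniform bound $\|R(i\lambda)\|_{\mathcal{H}\to\mathcal{H}} = O(1)$; the Gearhart--Prüss criterion then yields exponential decay of the energy. When $p \in (1, d]$, the loss is nontrivial and the polynomial-decay case of the main theorem produces a resolvent estimate $\|R(i\lambda)\| = O(|\lambda|^{\alpha})$ with $\alpha$ determined by the loss, which by Borichev--Tomilov (and keeping track of the fact that $A = \Delta^{1/2}$, so the Laplacian eigenvalues resonant with $i\lambda$ are of size $|\lambda|^4$) converts to the stated rate $\langle t\rangle^{-p/(d-p)}$ on initial data in $D(A) \times D(A^{1/2}) = H^1(M) \times H^{1/2}(M)$.

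The main obstacle is the precise matching of the loss exponent $d/(2p)$ with the abstract polynomial-decay formula so that the final rate is exactly $p/(d-p)$; this requires careful book-keeping of Sobolev exponents through the reparametrisation between wave and Laplacian frequencies. The kernel of $A$ (constants on $M$) is trivial for the energy and can be handled by an orthogonal decomposition that does not affect the high-frequency asymptotics governing the decay rate.
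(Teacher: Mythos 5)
Your overall strategy — recast \eqref{eq:5} in the abstract damped-wave framework, establish a Hölder--Sobolev coercivity estimate quantifying how the loss of ellipticity scales with $p$, then feed the resulting resolvent bounds into semigroup decay theorems — is the same route the paper takes in Example~\ref{p:water3}. The Cauchy--Schwarz factorisation $|\Delta^{1/4}v|^2=(a^{1/2}|\Delta^{1/4}v|)(a^{-1/2}|\Delta^{1/4}v|)$ is a small detour: the paper instead observes that $a^{-1}\in L^p$ makes multiplication by $a^{-1/2}$ bounded from $L^2(M)$ to $W^{-d/(2p),2}(M)$, which yields a \emph{linear} lower bound $\|(1+\Delta)^{-d/(4p)}\Delta^{1/4}u\|_{L^2}\le C\|a^{1/2}\Delta^{1/4}u\|_{L^2}$, exactly in the shape that Theorem~\ref{p:classell}(1) converts to $m$-ellipticity with $m(\lambda)=\lambda^{-(d/p-1)}$. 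Your quadratic inequality $\|v\|_{H^{1/2}}^4\le C\langle Bv,v\rangle\|v\|_{H^{1/2+d/(2p)}}^2$ does encode the same information (after an interpolation inequality it rearranges to $\|v\|_{H^{1/2-d/(2p)}}^2\lesssim\|Qv\|^2$), but it is not, as you claim, ``precisely the form of the weakly elliptic hypothesis'': Definition~\ref{d:mell} requires a bound against a frequency cutoff $\chi(\lambda^{-2}P)u$ with a genuinely \emph{lower-order} compact remainder on the right, not one featuring the \emph{stronger} norm $\|v\|_{H^{1/2+d/(2p)}}$. Converting your estimate into \eqref{eq:7} and extracting the exponent $m(\lambda)=\lambda^{1-d/p}$ is the bookkeeping you explicitly leave open, and it is not automatic — the paper devotes Theorem~\ref{p:classell} to exactly this reduction.

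The more substantive gap is the unique continuation step, which your proposal does not mention. Both the Gearhart--Pr\"uss criterion and the Borichev--Tomilov theorem require $i\mathbb{R}\subset\rho(\mathcal{A})$; the high-frequency resolvent bounds coming from the weak ellipticity alone do not give this, and one must rule out eigenvalues on the imaginary axis separately. In the paper this is handled by noting that $a^{-1}\in L^p$ forces $a>0$ a.e., so $a^{1/2}\Delta^{1/4}u=0$ implies $\Delta^{1/4}u=0$ and hence $\Delta u=0$, after which \cite[Lemma~3.10]{kw23} upgrades the resolvent bound to decay. Your remark about $\ker A$ being constants addresses a different issue (the zero eigenvalue of $\mathcal A$), not the absence of nonzero imaginary eigenvalues. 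Without this step the application of Gearhart--Pr\"uss and Borichev--Tomilov is incomplete.
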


\subsection{Introduction}
Let $H=H_0$ be an infinite-dimensional Hilbert space and $P:H_1\rightarrow H$ be a nonnegative self-adjoint operator with compact resolvent, defined on $H_1$, a dense subspace of $H$. The operator $P$ admits a spectral resolution and a functional calculus
\begin{equation}
P u=\int_0^\infty\rho^2\ dE_\rho(u), \ f(P)u=\int_0^{\infty} f(\rho^2)~dE_\rho(u),
\end{equation}
where $E_\rho$ is a projection-valued measure on $H$ and $\supp E_\rho\subset [0, \infty)$, and $f$ is a Borel measurable function on $[0,\infty)$ that formally yields an operator $f(P)$. For $s\in\mathbb{R}$, define the scaling operators and the interpolation spaces via
\begin{equation}\label{Lambdadef}
\Lambda^{s}u=\int_0^\infty (1+\rho^2)^{s}\ dE_\rho(u), \ H_s=\Lambda^{-s}(H_0).
\end{equation}
Those operators $\Lambda^{-s}: H\rightarrow H_s$ are bounded from above and below, and they commute with $P$. For $s>0$, $H_{-s}$ is isomorphic to the dual space of $H_s$ with respect to $H$. 

Let the observation space $Y$ be a Hilbert space. We will consider damping of the form $Q^*Q$, where the control operator $Q^*\in \mathcal{L}(Y, H_{-\frac{1}{2}})$ and the observation operator $Q\in \mathcal{L}(H_{\frac{1}{2}}, Y)$. Note that $Q^*Q$ is not necessarily a bounded operator on $H$. We consider an abstract damped second-order evolution equation:
\begin{equation}
(\partial_t^2+P+Q^*Q\partial_t)u=0.
\end{equation}
It can be written as a first-order evolution system:
\begin{equation}
\partial_t\begin{pmatrix}
u\\\partial_t u
\end{pmatrix}=\mathcal{A}\begin{pmatrix}
u\\\partial_t u
\end{pmatrix}, \ \mathcal{A}=\begin{pmatrix}
0 & 1\\
-P & -Q^*Q
\end{pmatrix}. 
\end{equation}
Here $\Ac$, defined on $\{(u, v)\in H_{\frac{1}{2}}\times H_{\frac{1}{2}}: Pu+Q^*Qv\in H\}$, generates a strongly continuous semigroup $e^{t\Ac}$ on $\mathcal{H}=H_{\frac{1}{2}}\times H$. See \cite{kw23,cpsst19} for further details.

\subsection{Main results}
The goal of this note is to understand the stability, that is the norm of $(\mathcal{A}+i\lambda)^{-1}$, and thus decay, of $e^{t\Ac}$ on the energy space $\Hcd=\Hc/\ker\Ac$, when $Q^*Q$ satisfies some ellipticity conditions but is not by itself in the functional calculus of $P$. We define the weak ellipticity and boundedness:
\begin{definition}[$m$-ellipticity]\label{d:mell}
Let $m(\lambda)$ be a positive continuous function on $(0,\infty)_\lambda$. We say the observation operator $Q: H_{\frac{1}{2}}\rightarrow Y$ is \emph{$m(\lambda)$-elliptic}, if {for some $\chi\in C^0([0, \infty))$ with $\chi(1)>0$,} there exist and $C, \lambda_0, N>0$ that 
\begin{equation}\label{eq:7}
m(\lambda)\|\chi(\lambda^{-2}P) u\|_H^2\le C\|Q u\|_Y^2+{o(\min\{m(\lambda)\lambda^{4N}, \lambda\})\|\Lambda^{-N}u\|_H^2}
\end{equation}
uniformly for all $u\in H_{\frac{1}{2}}$ and all $\lambda\in \mathbb{R}$ with $\lambda\ge \lambda_0$. 
\end{definition}
\begin{definition}[$m$-boundedness]
Let $m(\lambda)$ be a positive continuous function on $(0,\infty)_\lambda$. We say the observation operator $Q: H_{\frac{1}{2}}\rightarrow Y$ is \emph{$m(\lambda)$-bounded}, if {for some $\chi\in C^0([0, \infty))$ with $\chi(1)>0$,} there is $C>0$ such that
\begin{equation}
\|(1+\lambda^{-2}P)^{-\frac{1}{2}}Q^*Q\chi(\lambda^{-2}P)u\|_H^2\le Cm(\lambda)\|(1+\lambda^{-2}P)^{\frac{1}{2}} u\|_H^2,
\end{equation}
for all $u\in H_{\frac{1}{2}}$. 
\end{definition}

\begin{remark}
\label{r:mellbounded}
\begin{enumerate}[wide]
\item For readers familiar with semiclassical analysis, $m$-ellipticity (or $m$-boundedness) heuristically means $(m(h^{-1}))^{-\frac{1}{2}}Q$ (or $(m(h^{-1}))^{-1}Q^*Q$) being semiclassically elliptic (or bounded) over $\{h^2\rho^2=1\}$, the characteristic variety of $h^2P-1$. 
\item Consider two positive continuous functions $m_-(\lambda)\le m_+(\lambda)$ on $(0,\infty)_\lambda$. If $Q$ is $m_+$-elliptic, then $Q$ is also $m_-$-elliptic. If $Q$ is $m_-$-bounded, then $Q$ is also $m_+$-bounded. 
\item Consider two observation operators $Q_\pm: H_{\frac{1}{2}}\rightarrow Y_\pm$ such that $\|Q_- u\|_{Y_{-}}^{2}\le C\|Q_+ u\|_{Y_{+}}^{2}$ uniformly for $u\in H_{\frac{1}{2}}$. If $Q_-$ is $m$-elliptic, then $Q_+$ is also $m$-elliptic. 
\item Any observation operator $Q$ bounded from $H_{\frac{1}{2}}$ to $Y$ is a priori $\lambda^2$-bounded. 
\end{enumerate}
\end{remark}

For classically elliptic and bounded operators, the $m$-ellipticity and $m$-boundedness are easy to verify via Theorem \ref{p:classell} proved at the end of this note:
{\begin{example}[Linearised water waves]\label{p:water1}
In the setting of \S\ref{s11}, $P=\Delta$, $H=L^2(M)$, $H_s=W^{s, 2}(M)$ are the Sobolev spaces of order $s$ for $s\in\mathbb{R}$, $Q=\sqrt{a(x)}\Delta^{\frac{1}{4}}: H_{\frac{1}{2}}\rightarrow Y=H$. When $a(x)\in L^\infty(M)$ is bounded from above and below by positive constants, $Q$ is classically elliptic with respect to $\Lambda^{\frac{1}{2}}$:
\begin{equation}
\|\sqrt{a(x)}\Delta^{\frac{1}{4}} u\|_{L^2(M)}\ge C^{-1}\|\Lambda^{\frac{1}{2}}u\|_{L^2(M)}-C\|u\|_{L^2(M)}
\end{equation}
and Theorem \ref{p:classell} implies $Q$ is $\lambda^{2}$-elliptic and $\lambda^{2}$-bounded. 
\end{example}}

Our result is bifold. The first result is that $m$-elliptic damping gives an upper bound for semigroup stability. 
\begin{theorem}[Weak ellipticity gives stability]\label{t:decay}
Let $m(\lambda)$ be positive continuous function and let $Q$ be $m(\lambda)$-elliptic. Then there are $C, \lambda_0>0$ such that
\begin{equation}
\|(\mathcal{A}+i\lambda)^{-1}\|_{\mathcal{L}(\mathcal{H})}\le C \max \left\{\frac{1}{m(\abs{\lambda})}, 1\right\} 
\end{equation}
uniformly for all $\abs{\lambda}\ge \lambda_0$, $\lambda\in \mathbb{R}$. 
\end{theorem}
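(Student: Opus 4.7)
The plan is to convert the first-order resolvent problem $(\Ac+i\lambda)(u,v)^{\top}=(f,g)^{\top}$ into a second-order equation for $u$ alone, extract the dissipation identity controlling $\|Qu\|_{Y}$, and then combine the $m$-ellipticity hypothesis on the characteristic band $\{\rho\sim|\lambda|\}$ with an elementary elliptic bound off it. Setting $v=f-i\lambda u$ in the second component of the resolvent equation, one gets
\begin{equation}
(\lambda^{2}-P+i\lambda Q^{*}Q)u=F,\qquad F\coloneqq g+Q^{*}Qf-i\lambda f\in H_{-\frac{1}{2}}.
\end{equation}
Pairing with $u$ in the $H_{-\frac{1}{2}}$-$H_{\frac{1}{2}}$ duality and using self-adjointness of $P$, the imaginary part yields $\lambda\|Qu\|_{Y}^{2}=\Im\langle F,u\rangle$, which upon Cauchy--Schwarz and the continuity of $Q^{*}Q:H_{\frac{1}{2}}\to H_{-\frac{1}{2}}$ produces the observability-type bound
\begin{equation}
\|Qu\|_{Y}^{2}\le \frac{C}{|\lambda|}\bigl(\|g\|_{H}+(1+|\lambda|)\|f\|_{H_{\frac{1}{2}}}\bigr)\|u\|_{H_{\frac{1}{2}}}.
\end{equation}

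Next I would fix an auxiliary cutoff $\ti\chi\in C_{c}^{0}((0,\infty))$ equal to $1$ near $1$ whose support lies inside $\{\chi\ge c>0\}$ for the $\chi$ of Definition \ref{d:mell}, and decompose $u=u_{c}+u_{e}$ with $u_{c}\coloneqq\ti\chi(\lambda^{-2}P)u$. On the spectral range of $u_{e}$ one has $|\lambda^{2}-\rho^{2}|\gtrsim 1+\rho^{2}$, so $\lambda^{2}-P$ inverts the complementary projection with the correct scaling; applying $1-\ti\chi(\lambda^{-2}P)$ to the second-order equation and inverting delivers
\begin{equation}
\|u_{e}\|_{H_{\frac{1}{2}}}\le C\|F\|_{H_{-\frac{1}{2}}}+C|\lambda|\|Qu\|_{Y},
\end{equation}
where the last term accounts for the fact that $Q^{*}Qu$ is known only in $H_{-\frac{1}{2}}$. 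For the characteristic piece, the $m$-ellipticity applied to $u$ gives
\begin{equation}
m(|\lambda|)\|u_{c}\|_{H}^{2}\le C\|Qu\|_{Y}^{2}+o\bigl(\min\{m(|\lambda|)|\lambda|^{4N},|\lambda|\}\bigr)\|\Lambda^{-N}u\|_{H}^{2},
\end{equation}
and spectral localisation upgrades this to $\|u_{c}\|_{H_{\frac{1}{2}}}^{2}\lesssim|\lambda|^{2}\|u_{c}\|_{H}^{2}$.

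Summing the two halves together with the straightforward bound $\|v\|_{H}\le\|f\|_{H}+|\lambda|\|u\|_{H}\lesssim\|f\|_{H_{\frac{1}{2}}}+\|u\|_{H_{\frac{1}{2}}}$, I would absorb the lower-order remainder by writing $\|\Lambda^{-N}u\|_{H}\lesssim|\lambda|^{-2N}\|u_{e}\|_{H_{\frac{1}{2}}}+\|u_{c}\|_{H}$: the $o(|\lambda|)$ coefficient paired with $|\lambda|^{-4N}$ is absorbed by the elliptic estimate, while the $o(m(|\lambda|))$ coefficient against $\|u_{c}\|_{H}^{2}$ is absorbed into the leading $m(|\lambda|)\|u_{c}\|_{H}^{2}$, provided $|\lambda|\ge\lambda_{0}$ is large enough. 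Chaining with the observability bound of the first paragraph gives
\begin{equation}
\|u\|_{H_{\frac{1}{2}}}^{2}+\|v\|_{H}^{2}\le C\max\{m(|\lambda|)^{-1},1\}^{2}\|(f,g)\|_{\Hc}^{2},
\end{equation}
which is the announced resolvent estimate.

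The main technical obstacle is precisely this coupled absorption of the remainder in $m$-ellipticity: the $|\lambda|$ factor in $o(\min\{m(|\lambda|)|\lambda|^{4N},|\lambda|\})$ is \emph{larger} than the leading $m(|\lambda|)$ whenever $m$ decays, so absorption is impossible on the characteristic band alone. It can only be defeated by feeding back the $|\lambda|^{-2N}$ smallness of $\Lambda^{-N}$ on the elliptic band, where $u_{e}$ has already been controlled in $H_{\frac{1}{2}}$ via the ellipticity of $\lambda^{2}-P$. Orchestrating this simultaneous absorption on both frequency pieces is the delicate step, and it is exactly what the $\min$ structure in Definition \ref{d:mell} is designed to accommodate.
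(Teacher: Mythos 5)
Your outline --- second-order reduction, dissipation identity, spectral split into a characteristic band $u_c$ and an elliptic band $u_e$ --- is in the right spirit, and the paper's own strategy is also spectral. However, the paper works entirely at the level of the semiclassical operator $P_h = h^2P - ihQ^*Q - 1$ acting $H \to H$: it proves $\|P_h^{-1}\|_{\mathcal{L}(H)} \le C\max\{m^{-1}h^{-1},1\}$ (Proposition~\ref{p:semires}), establishes invertibility of $P_h$ (Lemma~\ref{p:ucp}), and then cites \cite[Lemma 3.7]{kw23} to transfer this to $(\Ac+i\lambda)^{-1}$. Your direct first-order argument has two genuine problems.

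The central gap is the inequality $\|\Lambda^{-N}u\|_H \lesssim |\lambda|^{-2N}\|u_e\|_{H_{1/2}} + \|u_c\|_H$. The elliptic band $u_e = (1-\tilde\chi(\lambda^{-2}P))u$ contains not only the high frequencies $\rho \gg |\lambda|$ (where $(1+\rho^2)^{-N} \lesssim |\lambda|^{-2N}$ indeed) but also the low frequencies $\rho \ll |\lambda|$, where $(1+\rho^2)^{-N}$ and $(1+\rho^2)^{1/2}$ are both of order one, so $\|\Lambda^{-N}u_e^{\mathrm{low}}\|_H \approx \|u_e^{\mathrm{low}}\|_{H_{1/2}}$ with no gain at all. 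Multiplied by the $o(|\lambda|)$ coefficient in Definition~\ref{d:mell}, this produces a term of size $o(|\lambda|)\|u_e\|_{H_{1/2}}^2$ that cannot be absorbed. The paper avoids this by not routing $\|\Lambda^{-N}u\|$ through $\|u_e\|_{H_{1/2}}$: Lemma~\ref{p:cpterr} proves the spectral bound $\|\Lambda^{-N}u\|^2 \le Ch^{4N}\|u\|^2 + C\bigl|\|hP^{1/2}u\|^2-\|u\|^2\bigr|$, so the non-characteristic (in particular low-frequency) contribution is tied to the defect of the real part of $\langle P_hu,u\rangle$, hence to $\|P_hu\|\|u\|$. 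After Young's inequality, $o(\lambda)\cdot\|P_hu\|\|u\|$ produces $o(m^{-2}h^{-2})\|P_hu\|^2 + \epsilon\|u\|^2$, precisely the same order as the main term, and the absorption closes. Nothing in your decomposition achieves this: you would need the low-frequency piece of $u_e$ to carry a $\lambda^{-2N}$ factor it does not have.

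There is also a scaling problem in the closing step. You use $\|Qu\|_Y^2 \le C|\lambda|^{-1}\|F\|_{H_{-1/2}}\|u\|_{H_{1/2}}$ with $\|F\|_{H_{-1/2}} \lesssim (1+|\lambda|)\|(f,g)\|_{\Hc}$ (because of the $-i\lambda f$ term), and $\|u_c\|_{H_{1/2}}^2 \lesssim |\lambda|^2\|u_c\|_H^2 \lesssim |\lambda|^2 m^{-1}\|Qu\|_Y^2$, together with $\|u_e\|_{H_{1/2}} \lesssim \|F\|_{H_{-1/2}} + |\lambda|\|Qu\|_Y$. Chaining these gives $\|u\|_{H_{1/2}} \lesssim |\lambda|^2\max\{m^{-1},1\}\|(f,g)\|_{\Hc}$, off by $|\lambda|^2$ from the target. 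The missing bookkeeping is exactly what \cite[Lemma 3.7]{kw23} packages: the correct quantity to estimate is the semiclassical norm $\|u\|_{H_{1/2}^h}$ (controlling both $\|h\Lambda^{1/2}u\|$ and $\|u\|$), and the source should be measured in the matching dual norm; the $-i\lambda f$ contribution then scales correctly. Your sketch never reconciles the classical $H_{1/2}$ norm with the semiclassical one, so the $\lambda$ powers do not cancel.
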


When the unique continuation hypothesis holds (that is, $(P-\lambda^2)u=0$ implies $Qu\neq 0$ for $\lambda>0$), and 
$m(\lambda)$ is chosen to be $1$, or the reciprocal of a function of positive increase, for example, $\lambda^{-s}$ or $e^{-s\lambda}$ for some $s\ge 0$, one can use \cite[Lemma 3.10]{kw23} (based on semigroup equivalence results in \cite{bt10,rss19}), to turn the stability results into exponential, polynomial and logarithmic energy decay for $e^{t\mathcal{A}}$ respectively.

The second result is that $m$-bounded damping gives a lower bound for the semigroup stability, which is asymptotic to the upper bound found in Theorem \ref{t:decay}. 
\begin{theorem}[Weak boundedness gives sharpness]\label{t:sharp}
Let $m(\lambda)$ be positive continuous function and let $Q$ be $m(\lambda)$-bounded. Then there exist a sequence of $\lambda_k\rightarrow \infty$ and $C>0$,
\begin{equation}
\|(\mathcal{A}+i\lambda_k)^{-1}\|_{\mathcal{L}(\mathcal{H})}\ge \frac{1}{Cm(\abs{\lambda_k})}.
\end{equation}
\end{theorem}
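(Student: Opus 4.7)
The plan is to construct, for a sequence of frequencies $\lambda_k\to\infty$ coming from the discrete spectrum of $P$, a test source $F_k\in\Hc$ whose image under $(\Ac+i\lambda_k)^{-1}$ has norm ratio bounded below by $1/(Cm(\lambda_k))$. Since $P\ge 0$ is self-adjoint with compact resolvent, its spectrum consists of isolated eigenvalues $\rho_k^2\to\infty$; fix an orthonormal family $\phi_k$ of eigenfunctions, set $\lambda_k:=\rho_k$, and take $F_k:=(0,\phi_k)\in\Hc$ of unit norm. Without loss of generality $-i\lambda_k\in\rho(\Ac)$, since otherwise the resolvent norm is infinite and the bound is trivial.

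Let $(u_k,v_k):=(\Ac+i\lambda_k)^{-1}F_k\in\Dc(\Ac)$. The first coordinate of the resolvent equation forces $v_k=-i\lambda_k u_k$, and the second reduces to the scalar identity
\begin{equation}
(\lambda_k^2-P+i\lambda_k Q^*Q)u_k=\phi_k,
\end{equation}
understood in the $H_{-\frac{1}{2}}$--$H_{\frac{1}{2}}$ duality. Now apply a two-step test. Pairing the reduced equation against $\phi_k$ and using $(\lambda_k^2-P)\phi_k=0$ yields the scalar identity $i\lambda_k\langle Qu_k,Q\phi_k\rangle_Y=1$; by Cauchy--Schwarz this gives $\|Qu_k\|_Y^2\ge 1/(\lambda_k^2\|Q\phi_k\|_Y^2)$. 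Pairing the same equation against $u_k$ and noting that the $(\lambda_k^2-P)$ term contributes only a real scalar while the $i\lambda_k Q^*Q$ term contributes $i\lambda_k\|Qu_k\|_Y^2$, identification of imaginary parts gives $|\langle u_k,\phi_k\rangle_H|\ge\lambda_k\|Qu_k\|_Y^2$.

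Chaining the two bounds, $|\langle u_k,\phi_k\rangle_H|\ge 1/(\lambda_k\|Q\phi_k\|_Y^2)$. Because $\phi_k$ is a $P$-eigenfunction, the orthogonal projection onto $\spa\{\phi_k\}$ commutes with $\Lambda^{\frac{1}{2}}$, so writing $u_k=\beta_k\phi_k+w_k$ with $\langle w_k,\phi_k\rangle_H=0$ the orthogonality is automatic in $H_{\frac{1}{2}}$, and Pythagoras yields $\|u_k\|_{H_{\frac{1}{2}}}^2\ge(1+\rho_k^2)|\beta_k|^2$. Taking square roots, $\|u_k\|_{H_{\frac{1}{2}}}\ge\rho_k|\beta_k|\ge 1/\|Q\phi_k\|_Y^2$, and a fortiori $\|U_k\|_\Hc\ge\|u_k\|_{H_{\frac{1}{2}}}\ge 1/\|Q\phi_k\|_Y^2$.

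It remains to bound $\|Q\phi_k\|_Y^2$ by a multiple of $m(\lambda_k)$, and this is the only place where $m$-boundedness enters quantitatively. Applying the defining inequality to $u=\phi_k$ at $\lambda=\lambda_k$, one uses $\chi(\lambda_k^{-2}P)\phi_k=\chi(1)\phi_k$ and $(1+\lambda_k^{-2}P)^{\frac{1}{2}}\phi_k=\sqrt{2}\phi_k$, and isolates the $\phi_k$-component of $(1+\lambda_k^{-2}P)^{-\frac{1}{2}}Q^*Q\phi_k$ in the spectral basis of $P$: this coefficient equals $\|Q\phi_k\|_Y^2$ up to the normalisation factor $1/\sqrt{2}$, giving the pointwise bound $\|Q\phi_k\|_Y^2\le C'm(\lambda_k)$ for some absolute constant $C'>0$. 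Plugging back yields $\|(\Ac+i\lambda_k)^{-1}\|_{\mathcal{L}(\Hc)}\ge\|U_k\|_\Hc/\|F_k\|_\Hc\ge 1/(C'm(\lambda_k))$, as claimed. The main delicacy is this last step, where one must track the constants arising from $\chi$ and from the eigenvalue-normalised scaling $(1+\lambda_k^{-2}P)\phi_k=2\phi_k$ carefully so that the correct power of $m(\lambda_k)$ is extracted.
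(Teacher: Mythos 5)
Your argument is correct and reaches the stated conclusion, but it takes a genuinely different route from the paper's. The paper proves a lower bound on $\|P_h^{-1}\|_{\mathcal{L}(H)}$ (Proposition~\ref{p:semilwb}) by applying $P_h^*$ to an eigenfunction $u_h$ of $h^2P-1$, invoking $m$-boundedness to get $\|P_h^*u_h\|_{H_{-1/2}^h}\le Cmh\|u_h\|$, passing to the adjoint/duality bound $\|P_h^{-1}\|_{H\to H_{1/2}^h}\ge (Cmh)^{-1}$, and then using the operator-norm comparison Lemma~\ref{p:normineq} together with the equivalence lemma \cite[Lemma 3.9]{kw23} to convert to a lower bound on $(\Ac+i\lambda)^{-1}$. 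You instead work directly at the level of $\Ac$: you feed the test source $(0,\phi_k)$ into $(\Ac+i\lambda_k)^{-1}$, reduce to the stationary equation $(\lambda_k^2-P+i\lambda_k Q^*Q)u_k=\phi_k$, and extract the lower bound by two pairings (one against $\phi_k$, exploiting that $\phi_k$ is a $P$-eigenfunction, and one against $u_k$ taking imaginary parts) followed by Pythagoras, concluding with an upper bound on $\|Q\phi_k\|_Y^2$ from $m$-boundedness tested on $\phi_k$ itself. Both proofs pivot on eigenfunctions at frequency $\lambda_k=\rho_k$ and both invoke $m$-boundedness on those eigenfunctions, but your conversion mechanism is a direct energy-pairing argument and is self-contained, avoiding Lemma~\ref{p:normineq} and the cited conversion lemma from \cite{kw23}, at the cost of a slightly longer chain of elementary estimates. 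One small caveat: your final step reads the $m$-boundedness inequality as a bound between unsquared norms with a single factor $m(\lambda_k)$; that is the version used in Proposition~\ref{p:semilwb} and Theorem~\ref{p:classell}(2), and it is the one needed to get the clean bound $\|Q\phi_k\|_Y^2\le C'm(\lambda_k)$ (the displayed Definition~\ref{d:mell}-style formula with squared norms would yield only $\|Q\phi_k\|_Y^4\lesssim m(\lambda_k)$), so you should state explicitly which normalisation of $m$-boundedness you are applying.
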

It implies that when $Q$ is $m$-bounded, it is not possible to improve the bound in Theorem \ref{t:decay} to $o(1/M(\abs{\lambda}))$. Thus, we have proved that a $m$-elliptic and $m$-bounded damping gives sharp semigroup stability. We have a handy corollary below to show energy decay:

\begin{corollary}[Weak ellipticity gives decay]\label{t3}
Let $Q: H_{\frac{1}{2}}\rightarrow Y$ be bounded from below by $\Lambda^{s}$ for some $s\in (-\infty, 0)$, that is, there exists $C>0$ such that
\begin{equation}\label{eq:4}
\|\Lambda^{s} u\|_{H}\le C \|Qu\|_{Y}
\end{equation}
uniformly for $u\in H_{\frac{1}{2}}/\ker P$. Then there is $C>0$ such that for all $t\ge 0$, 
\begin{equation}
\|e^{t\Ac}\Ac^{-1}\|_{\Hc\rightarrow\Hcd}\le C\langle t\rangle^{\frac{1}{4s}}.
\end{equation}
When \eqref{eq:4} holds with $s=0$, then there is $C>0$ such that
\begin{equation}
\|e^{t\Ac}\|_{\Hcd\rightarrow \Hcd}\le e^{-Ct}. 
\end{equation}
\end{corollary}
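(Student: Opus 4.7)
The plan is threefold: show that the coercivity hypothesis \eqref{eq:4} forces $Q$ to be $m$-elliptic with $m(\lambda)=c\lambda^{4s}$, then apply Theorem~\ref{t:decay} to extract a resolvent bound of order $|\lambda|^{-4s}$, and finally convert that bound via \cite[Lemma 3.10]{kw23} into the stated decay of the semigroup.

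For the first step, I would choose a test cut-off $\chi\in C^0_c((0,\infty))$ with $\chi(1)>0$, so that $\chi(\lambda^{-2}P)$ annihilates $\ker P$ for every $\lambda$. Writing $u=\Pi_0 u+u_1$, with $\Pi_0$ the finite-rank spectral projection onto $\ker P$, the remainder $u_1$ lies in $H_{\frac{1}{2}}/\ker P$, where \eqref{eq:4} is usable. Since $\supp\chi$ is a compact subset of $(0,\infty)$, on the spectral support of $\chi(\lambda^{-2}P)$ one has $1+\rho^2\simeq\lambda^2$, so $(1+\rho^2)^{2s}\gtrsim \lambda^{4s}$ uniformly; the spectral theorem therefore yields $\|\chi(\lambda^{-2}P)u\|_H^2\le C\lambda^{-4s}\|\Lambda^{s}u_1\|_H^2$, and combining with \eqref{eq:4} gives
\begin{equation}
\lambda^{4s}\|\chi(\lambda^{-2}P)u\|_H^2\le C\|Qu_1\|_Y^2\le 2C\|Qu\|_Y^2+2C\|Q\Pi_0 u\|_Y^2.
\end{equation}
The finite-dimensionality of $\ker P$ together with the identity $\Pi_0\Lambda^N=\Pi_0$ in the functional calculus lets one dominate $\|Q\Pi_0 u\|_Y\lesssim \|\Pi_0 u\|_H\le \|\Lambda^{-N}u\|_H$, leaving a $\lambda$-independent constant against $\|\Lambda^{-N}u\|_H^2$. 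Taking $N$ with $4s+4N>1$, this constant is automatically $o(\min\{\lambda^{4s+4N},\lambda\})$ as $\lambda\to\infty$, confirming $\lambda^{4s}$-ellipticity per Definition~\ref{d:mell}.

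The remaining steps are essentially direct. Theorem~\ref{t:decay} yields $\|(\Ac+i\lambda)^{-1}\|_{\Lc(\Hc)}\le C\max\{|\lambda|^{-4s},1\}$ for all sufficiently large $|\lambda|$. Moreover, \eqref{eq:4} immediately supplies the unique-continuation hypothesis required by \cite[Lemma 3.10]{kw23}, since any eigenfunction of $P$ with a positive eigenvalue lies in $(\ker P)^\perp$ and hence cannot be in $\ker Q$. That lemma then converts the resolvent bound into the polynomial rate $\langle t\rangle^{1/(4s)}$ when $s<0$ (via the Borichev--Tomilov type equivalence of \cite{bt10,rss19}) and into exponential decay $e^{-Ct}$ when $s=0$ (via Gearhart--Pr\"uss).

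The main obstacle is the first step, where one must reconcile the frequency-localised quantity $\|\chi(\lambda^{-2}P)u\|_H$ with the global coercivity provided by $\Lambda^{s}$, while routing the finite-dimensional $\ker P$ discrepancy through the $\Lambda^{-N}$ error term tolerated by Definition~\ref{d:mell}. Once $\lambda^{4s}$-ellipticity is in hand, the rest is plug-and-play with results already in place.
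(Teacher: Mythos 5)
Your proposal is correct and follows the same structure as the paper's argument: establish $\lambda^{4s}$-ellipticity from \eqref{eq:4}, invoke Theorem~\ref{t:decay} for the resolvent bound, verify unique continuation (that $(P-\lambda^2)u=0$ forces $Qu\neq 0$), and apply \cite[Lemma 3.10]{kw23} to pass to semigroup decay. The paper's proof is terser and leaves the $m$-ellipticity step implicit (it is effectively an application of Theorem~\ref{p:classell}(1), which handles the $\ker P$ contribution through the $\Lambda^{-N}$ error just as you do with the projection $\Pi_0$), so you are simply making explicit what the paper tacitly delegates to that theorem.
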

\begin{remark}[Strong monotonicity]
Corollary \ref{t3} is interpreted that damping given by $Q$ larger than $\Lambda^s$ must give at least the same decay rate as that given by $\Lambda^s$. In particular, in this case there is no overdamping, contrary to the general case of weak monotonicity discussed in \cite[\S 2.2]{kw23} (see also \cite{aln14,sta17,kle19b,dk20,kw22}). 
\end{remark}
Theorems \ref{t:decay}, \ref{t:sharp} {and Corollary \ref{t3}} constitute improvements over \cite[Theorems 12.1 and 12.2]{DelPat21}, where $Q^*Q=f(P)$ for some nonnegative continuous function $f$. Such an assumption is often too strong and restrictive. Our results do not require $Q^*Q$ to be in the functional calculus of $P$ and also accommodates compact errors. {The compact errors are natural in the studies of elliptic estimates, and accommodating them in our theorems allows us to obtain new stability results: see Example \ref{p:egpdo}. {Corollary \ref{t3}} also generalises the result of \cite{LiuZha15}, the authors of which studied the case $\ker P=0$ without allowing compact errors in \eqref{eq:7}. The separation of $\ker P$ from $P\ge 0$ is not trivial, when $Q^*Q$ is no longer relatively compact.} Furthermore, in contrast to \cite{DelPat21,LiuZha15}, we are able to assess sharpness of the decay rates even for damping outside of the functional calculus.

Here we present concrete examples where we get new results. 

\begin{example}[Linearised water waves, bounded from below]\label{p:water2}
In the setting of \S\ref{s11} and Example \ref{p:water2}, assume $a(x)$ is bounded from above and below by positive constants. Note uniformly for all $u\in W^{\frac{1}{2}, 2}(M)/\operatorname{Span}\{1\}$, 
\begin{equation}
\|\Lambda^{\frac{1}{2}}u\|_{L^2(M)}\le C\|\sqrt{a(x)}\Delta^{\frac{1}{4}}u\|_{L^2(M)}.
\end{equation}
Corollary \ref{t3} implies for some $C>0$,
\begin{equation}
E(u,t)\le e^{-Ct}E(u, 0),
\end{equation}
for all $t>0$ and for all $u$ that solves \eqref{eq:5}.
\end{example}

\begin{example}[Linearised water waves, degenerate]\label{p:water3}
We now consider that in the setting of \S\ref{s11} that $a(x)\ge 0$ may vanish on a measure zero set. In order to control its degree of degeneracy, we assume $(a(x))^{-1}\in L^p(M)$ for $p\in (1,\infty)$: the larger $p$ is, the faster $a(x)$ vanishes near its zeros. Sobolev embedding implies that $(a(x))^{-\frac{1}{2}}: L^2(M)\rightarrow W^{-\frac{d}{2p},2}(M)$ is bounded (see for example, \cite[Lemma 2.17]{kw23}). Thus
\begin{equation}
\|a^{\frac{1}{2}}\Delta^{\frac{1}{4}}u\|_{L^2}\ge C^{-1}\|(1+\Delta)^{-\frac{d}{4p}}\Delta^{\frac{1}{4}}u\|_{L^2}\ge C^{-1}\|\Lambda^{-\frac{1}{4}(\frac{d}{p}-1)} u\|_{L^2}-C\|\Lambda^{-\frac{1}{4}(\frac{d}{p}-1)-1}u\|_{L^2}. 
\end{equation}
Thus $Q=a^{\frac{1}{2}}\Delta^{\frac{1}{4}}$ is classically elliptic with respect to $\Lambda^{-\frac{1}{4}(\frac{d}{p}-1)}$. Apply Theorem \ref{p:classell} to see $Q$ is $\lambda^{-(\frac{d}{p}-1)}$-elliptic. Furthermore, since $a$ only vanishes on a measure zero set, $a^{\frac{1}{2}}\Delta^{\frac{1}{4}}u=0$ implies $\Delta^{\frac{1}{4}}u=0$ on $M$, and thus $\Delta u=0$. This implies the unique continuation holds. Apply Theorem \ref{t:decay} with \cite[Lemma 3.10]{kw23} to see
\begin{enumerate}
\item  When $p\in [d, \infty)$ and $p\neq 1$, there is $C>0$ such that uniformly for all $t>0$,
\begin{equation}\label{eq:8}
E(u,t)\le e^{-Ct} E(u,0),
\end{equation}
for all $u$ solving \eqref{eq:5}. 
\item When $p\in (1,d)$, there is $C>0$ such that uniformly for all $t>0$, 
\begin{equation}
E(u,t)^{\frac{1}{2}}\le C\langle t\rangle^{-\frac{p}{d-p}}(\|u(0,x)\|_{W^{1,2}(M)}+\|\partial_t u(0,x)\|_{W^{\frac{1}{2},2}(M)})
\end{equation}
for all $u$ solving \eqref{eq:5} with initial data in $W^{1,2}(M)\times W^{\frac{1}{2},2}(M)$. 
\item When $p=1$ and $d\ge 2$, the Sobolev embedding works with $(a(x))^{-\frac{1}{2}}: L^2(M)\rightarrow W^{-\frac{d}{2p}-0, 2}(M)$. For each $\epsilon>0$,there is $C_\epsilon>0$ such that uniformly for all $t>0$, 
\begin{equation}
E(u,t)^{\frac{1}{2}}\le C_\epsilon\langle t\rangle^{-\frac{p}{d-p}+\epsilon}(\|u(0,x)\|_{W^{1,2}(M)}+\|\partial_t u(0,x)\|_{W^{\frac{1}{2},2}(M)})
\end{equation}
for all $u$ solving \eqref{eq:5} with initial data in $W^{1,2}(M)\times W^{\frac{1}{2},2}(M)$. 
\item When $p=1$ and $d=1$, for any $N>0$, there is $C_N>0$ such that uniformly for all $t>0$, 
\begin{equation}
E(u,t)^{\frac{1}{2}}\le C_N\langle t\rangle^{-N}(\|u(0,x)\|_{W^{1,2}(M)}+\|\partial_t u(0,x)\|_{W^{\frac{1}{2},2}(M)})
\end{equation}
for all $u$ solving \eqref{eq:5} with initial data in $W^{1,2}(M)\times W^{\frac{1}{2},2}(M)$. 
\end{enumerate}
As an example, consider $M=\mathbb{S}^1=[-1, 1]_{x}$ with endpoints identified. Consider the damping $a(x)=x^{2s}$ for $s\in [0, \frac{1}{2})$, then $(a(x))^{-1}\in L^{\frac{1}{2s}-0}(M)\subset L^{1+0}(M)$ and we always have exponential decay \eqref{eq:8}. Contextually, in \cite{amw23}, only polynomial decay has been shown with a smaller damping $Q=a^\frac{1}{2}$ that degenerates slowly near its zeros. 
\end{example}

The next few examples are devoted to the damped wave equations with unbounded, Kelvin--Voigt, and pseudodifferential damping. Let $M$ be a compact smooth manifold, $H=L^2(M)$, $P=\Delta\ge 0$ and $Q: W^{1,2}(M)\rightarrow Y$. The damped wave equation is 
\begin{gather}\label{eq:6}
(\partial_t^2+\Delta
 +Q^*Q \partial_t )u(t,x)=0,\\
u(0, x)\in {W^{1,2}(M)}, \partial_t u(0, x)\in {L^2(M)},
\end{gather}
The energy of the solution is
\begin{equation}
E(u,t)=\|\partial_t u\|_{L^2(M)}^2+\|\nabla u\|_{L^2(M)}^2. 
\end{equation}
\begin{example}[Damped wave equation]\label{eg:2}
Consider a function ${a(x)}\in L^\infty{(M)}$ with $\inf_{x\in M}{\abs{a(x)}}>0$, and let $Q=\sqrt{a(x)}\langle \Delta\rangle^{s} : W^{1,2}(M)\rightarrow L^2(M)$ for $s\in (-\infty,\frac{1}{2}]$. Here $\langle \cdot\rangle=(1+\abs{\cdot}^2)^{\frac{1}{2}}$. Note that $[Q^*Q, P]\neq 0$ and hence $Q^*Q$ is not in the functional calculus of $P$. Here $Q$ is a classically elliptic operator, and Theorem \ref{p:classell} implies $Q$ is $\lambda^{4s}$-elliptic. Corollary \ref{t3} further gives the energy decay rates:
\begin{enumerate}
\item  When $s\in [0, \frac{1}{2}]$, there is $C>0$ such that uniformly for all $t>0$,
\begin{equation}
E(u,t)\le e^{-Ct} E(u,0),
\end{equation}
for all $u$ solving \eqref{eq:6}. This rate still holds when we replace $\langle \Delta\rangle^{s}$ by {$\Delta^{s}$}.
\item When $s\in (-\infty, 0)$, there is $C>0$ such that uniformly for all $t>0$, 
\begin{equation}
E(u,t)^{\frac{1}{2}}\le C\langle t\rangle^{\frac{1}{4s}}(\|u(0,x)\|_{W^{2,2}(M)}+\|\partial_t u(0,x)\|_{W^{1,2}(M)})
\end{equation}
for all $u$ solving \eqref{eq:6} with 
initial data in $W^{2,2}(M)\times W^{1,2}(M)$. 
\end{enumerate}
When $s\in [0, \frac{1}{2}]$ and $a(x)\in L^\infty(M)$, Theorem \ref{p:classell} implies $Q$ is $\lambda^{4s}$-bounded, and Theorem \ref{t:sharp} implies the rate in (1) are optimal. When $s\in(-\infty, 0)$, if we further impose the regularity assumption to $a(x)\in W^{-2s{+0}, \infty}(M)$ being $(-2s+{0})$-Hölder, Theorem \ref{p:classell} implies
 $Q$ is $\lambda^{4s}$-bounded and the rate in (2) is optimal. 
\end{example}

\begin{example}[Kelvin--Voigt damping]
Let $A$ be a {bundle isomorphism} on $TM$ such that $A_x\in \operatorname{Iso}(T_xM)$, not necessarily continuous in $x\in M$. Assume 
\begin{equation}
\esssup_{x\in M}\|A_x\|_{\mathcal{L}(T_xM)}<\infty, \ \esssup_{x\in M}\|A_x^{-1}\|_{\mathcal{L}(T_xM)}<\infty.
\end{equation}
Then $Q=A \nabla: W^{1,2}(M)\rightarrow L^2(M, TM)$ is $\lambda^2$-bounded and $\lambda^2$-elliptic, where $\nabla$ is the gradient. Apply Corollary \ref{t3} to see
there is $C>0$ such that for all $t>0$,
\begin{equation}
E(u, t)\le e^{-Ct} E(u,0),
\end{equation}
for all $u$ solving \eqref{eq:6}. In the special case that $A_x (x,v)= (x,a(x)v)$ for $a(x)\in L^\infty(M)$ with $\essinf_{x\in M}\abs{a(x)}>0$, we recovered the exponential decay for the Kelvin--Voigt damping of $L^\infty$-regularity. Contextually, under additional $C^1$-regularity, it was shown in \cite{bur20,bs22} that the same exponential rate holds when $a(x)$ may vanish on some open sets but still satisfies the geometric control condition. {$L^\infty$-regularity in this case only gives optimal polynomial decay: see \cite{bs22b}.} 
\end{example}

\begin{example}[Pseudodifferential damping]\label{p:egpdo}
For $s\in (-\infty,\frac{1}{2}]$, consider $Q\in \Psi^{2s}(M)$, an (one-step polyhomogeneous) pseudodifferential operator of order $2s$. Then Theorem \ref{p:classell} implies $Q$ is $\lambda^{4s}$-bounded. If $Q$ is classically elliptic, that is, its principal symbol $\abs{\sigma_{2s}(Q)(x,\xi)}$ is uniformly bounded from below by $C^{-1}\langle \xi\rangle^{2s}$ on $\{\abs{\xi}\ge C\}$ for some $C>0$, then for any $N>0$, we have the elliptic estimate
\begin{equation}
\|Qu\|_{L^2(M)}\le C\|u\|_{W^{2s,2}(M)}+C_N\|u\|_{W^{-N, 2}(M)},
\end{equation}
uniformly for all $u\in W^{1,2}(M)$. Theorem \ref{p:classell} implies $Q$ is $\lambda^{4s}$-elliptic. Our Theorems \ref{t:decay} and \ref{t:sharp} below imply the sharp estimate $\|(\mathcal{A}+i\lambda)^{-1}\|_{\mathcal{L}(\mathcal{H})}\le C\max\{\lambda^{-4s},1\}$ for large $\lambda$. With extra unique continuation hypotheses on $Q$ explicitly, one can obtain energy decay results. Pseudodifferential damping models dissipation in anisotropic materials: the energy of waves is dissipated at different rates depending on the direction of propagation.  The case $s=0$ was studied in \cite{kk22}, and case $s\neq 0$ in \cite{kw23}. See further in Theorem \ref{p:classell} and references \cite{zwo12,dz19}.
\end{example}

\subsection{Acknowledgement}
The authors thank Jeffrey Galkowski and Jared Wunsch for discussions around the results. LP and NV are supported by the Research Council of Finland grant 349002. RPTW is supported by EPSRC grant EP/V001760/1.

\section{Proof}
Consider the semiclassical operator $P_h=h^2P-ihQ^*Q-1{:H_{\frac{1}{2}}\rightarrow H_{-\frac{1}{2}}}$, {where $h\in (0, h_0)$ for some small $h_0>0$. It is Fredholm: see \cite[Lemma 3.4]{kw23}. }In this section, we prove the upper bounds and lower bounds for
 $P_h^{-1}$ in Propositions \ref{p:semires} and \ref{p:semilwb}, respectively. We state and prove Theorem \ref{p:classell} at the end of this section. Throughout this section, we use the abbreviation $m=m(\lambda)=m(h^{-1})$. We start with an upper bound for $P_h^{-1}$.

\begin{proposition}[Semiclassical resolvent estimate]\label{p:semires}
Let $m$ be a positive continuous function on $(0,\infty)$ and let $Q$ be $m$-elliptic, that is, {for some $\chi\in C^0([0, \infty))$ with $\chi(1)>0$,} there exist $C, N>0$ such that
\begin{equation}\label{eq:semiell}
\|\chi(h^{2}P) u\|^2\le C(m(h^{-1}))^{-1}\|Q u\|_Y^2+{o(\min\{h^{-4N}, m(h^{-1})^{-1}h^{-1}\})}\|\Lambda^{-N}u\|^2,
\end{equation}
uniformly for all $u\in H_{\frac{1}{2}}$ and all small $h>0$. Then there is $C>0$ such that
\begin{equation}
\|P_h^{-1}\|_{\mathcal{L}(H)}\le C\max \left\{\frac{h^{-1}}{m(h^{-1})}, 1\right\} 
\end{equation} 
uniformly for all $h>0$ small.
\end{proposition}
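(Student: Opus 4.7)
The plan is a frequency-localized resolvent estimate: given $u \in H_{\frac{1}{2}}$ with $P_h u = f \in H$, decompose $u$ into a mid-frequency part (spectral support of $P$ near $h^{-2}$) controlled by the hypothesis and an off-mid-frequency part controlled by an elliptic parametrix for $h^2 P - 1$, then close the loop via the dissipative energy identity and AM-GM. First I would note that, since $\chi(1) > 0$ and $\chi$ is continuous, there is a nonnegative smooth compactly supported cutoff $\psi$ equal to $1$ near $s = 1$ with $|\psi|^2 \le C|\chi|^2$ pointwise; the $m$-ellipticity hypothesis then holds with $\psi$ in place of $\chi$. Pairing $\langle P_h u, u\rangle$ in the $(H_{-\frac{1}{2}}, H_{\frac{1}{2}})$ duality and taking imaginary parts gives the basic energy bound $\|Qu\|_Y^2 \le h^{-1}\|f\|_H\|u\|_H$.

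For the off-mid-frequency part, set $\tilde g(s) = (1-\psi(s))/(s-1)$, which lies in $L^\infty([0,\infty))$ since $1-\psi$ vanishes near $s=1$. Since $(h^2 P - 1)u = f + ihQ^*Qu$ in $H_{-\frac{1}{2}}$, we obtain $(1-\psi)(h^2P)u = \tilde g(h^2 P) f + ih \tilde g(h^2 P) Q^*Q u$. By spectral calculus, $\|\tilde g(h^2 P)\|_{H \to H} \le C$ and, using that $\tilde g(s) \sim 1/s$ at infinity while $\tilde g$ is bounded on the rest of its support, the multiplier $\tilde g(h^2\rho^2)(1+\rho^2)^{1/2}$ is bounded by $C h^{-1}$, so $\|\tilde g(h^2 P)\|_{H_{-\frac{1}{2}} \to H} \le Ch^{-1}$. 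Combined with $\|Q^*Qu\|_{H_{-\frac{1}{2}}} \le C\|Qu\|_Y$, these give the elliptic-parametrix bound $\|(1-\psi)(h^2 P)u\|_H \le C(\|f\|_H + \|Qu\|_Y)$.

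The mid-frequency estimate is the hypothesis applied at $\lambda = h^{-1}$: $m\|\psi(h^2 P)u\|_H^2 \le C\|Qu\|_Y^2 + r(h^{-1})\|\Lambda^{-N}u\|_H^2$ with $r(h^{-1}) = o(\min\{m h^{-4N}, h^{-1}\})$. Since $\psi(h^2 P)u$ has spectral support in $\{\rho \sim h^{-1}\}$, the multiplier $(1+\rho^2)^{-N}\psi(h^2\rho^2)$ is bounded by $Ch^{2N}$, so $\|\Lambda^{-N}\psi(h^2 P)u\|_H \le Ch^{2N}\|u\|_H$; combining with the elliptic-parametrix bound for the complementary piece yields $\|\Lambda^{-N}u\|_H^2 \le Ch^{4N}\|u\|_H^2 + C(\|f\|_H^2 + \|Qu\|_Y^2)$. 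Using $\|u\|^2 \le 2\|\psi(h^2P)u\|^2 + 2\|(1-\psi)(h^2P)u\|^2$, combining the three bounds, substituting the energy estimate, and applying AM-GM is designed to yield $\|u\|_H \le CM(h)\|f\|_H$ for $h$ small, where $M(h) = \max\{(mh)^{-1},1\}$.

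The main obstacle is absorbing the remainder $r(h^{-1})\|\Lambda^{-N}u\|^2$ without losing the sharp $M(h)$ scaling. The key is that the remainder bound is two-sided: the inequality $r(h^{-1}) \le o(mh^{-4N})$ makes $m^{-1}r(h^{-1})h^{4N} = o(1)$, so the mid-frequency contribution $m^{-1}rh^{4N}\|u\|^2$ to $m^{-1}r\|\Lambda^{-N}u\|^2$ is absorbable into $\|u\|^2$; separately, $r(h^{-1}) \le o(h^{-1})$ ensures the $r\|f\|^2$ piece lies within $M(h)^2\|f\|^2$ after division by $m$. The most delicate piece is the cross term $r\|Qu\|^2$ coming from the off-mid component of the Sobolev bound, which, after applying the energy estimate $\|Qu\|^2 \le h^{-1}\|f\|\|u\|$, must be absorbed by a careful choice of AM-GM weights that uses each side of the $o(\min\{\cdot,\cdot\})$ bound in the regime where it is strongest; this balancing, enabled precisely by the two-sided scaling designed into the definition of $m$-ellipticity, is the key technical step.
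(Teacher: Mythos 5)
The proposal takes a genuinely different route from the paper: instead of working directly with the quadratic form $\langle P_h u,u\rangle$ and spectral-measure inequalities (Lemma~\ref{p:cpterr} together with the pointwise bound $C^{-1}\le|h^2\rho^2-1|+|\chi(h^2\rho^2)|^2$), you decompose $u$ into mid and off-mid spectral pieces and invert $h^2P-1$ on the off-mid range via the bounded Borel function $\tilde g(s)=(1-\psi(s))/(s-1)$. This is a reasonable idea, but as written it does not close, and the gap is precisely the one you flag as ``the most delicate piece.''

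Your off-mid parametrix bound reads $\|(1-\psi)(h^2P)u\|\le C(\|f\|+\|Qu\|_Y)$ with an $O(1)$ coefficient on $\|Qu\|_Y$, and the only control you have on $\|Qu\|_Y$ is the dissipative identity $\|Qu\|_Y^2\le h^{-1}\|f\|\|u\|$, which carries a factor $h^{-1}$. When you square and sum, the off-mid contribution alone yields $\|u\|^2\lesssim h^{-1}\|f\|\|u\|+\dots$, and \emph{no} choice of AM--GM weight repairs this: writing $h^{-1}\|f\|\|u\|\le\delta\|u\|^2+\delta^{-1}h^{-2}\|f\|^2$, absorbing the first term forces $\delta<1$, leaving a coefficient $\gtrsim h^{-2}$ on $\|f\|^2$. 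The target is $M(h)^2=\max\{m^{-2}h^{-2},1\}$, which is strictly smaller than $h^{-2}$ whenever $m\gg1$ — so the argument can only ever prove $\|P_h^{-1}\|\lesssim h^{-1}\max\{m^{-1},1\}$, not the sharp $\max\{m^{-1}h^{-1},1\}$. The same $h^{-1}$ loss infects your replacement for Lemma~\ref{p:cpterr}: from $\|\Lambda^{-N}u\|^2\lesssim h^{4N}\|u\|^2+\|f\|^2+\|Qu\|_Y^2$, the cross term $e(h)\|Qu\|_Y^2\le e(h)h^{-1}\|f\|\|u\|$ produces, after AM--GM, a coefficient $e(h)^2h^{-2}=o(m^{-2}h^{-4})$ on $\|f\|^2$, again off by $h^{-2}$. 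The paper's proof avoids this loss by never inverting $h^2P-1$ via $\|Qu\|_Y$; instead, both Lemma~\ref{p:cpterr} and the algebraic inequality $C^{-1}\le|h^2\rho^2-1|+|\chi(h^2\rho^2)|^2$ are engineered so that the off-characteristic mass is controlled through the \emph{real} part of the pairing, $|\|hP^{1/2}u\|^2-\|u\|^2|=|\cre\langle P_hu,u\rangle|\le\|f\|\|u\|$, which costs no power of $h^{-1}$. To salvage your approach you would need the parametrix estimate with an $O(h)$ (not $O(1)$) prefactor on $\|Qu\|_Y$; that is available for the low-frequency piece $\rho\lesssim 1$ and (with $N\ge 1/2$) for $\|\Lambda^{-N}(1-\psi)(h^2P)u\|$, but the plain $L^2$-bound $\|(1-\psi)(h^2P)u\|$ genuinely pays $h^{-1}$ at frequencies $\rho\sim ch^{-1}$, $c\neq1$, through $\|\tilde g(h^2P)\|_{H_{-1/2}\to H}\sim h^{-1}$, so the decomposition would have to be restructured to use a weighted norm there as well.
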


To prove Proposition \ref{p:semires}, we need to estimate the compact error in \eqref{eq:semiell}.

\begin{lemma}[Compact error estimates]\label{p:cpterr}
For $N>0$, there exists $C>0$ such that 
\begin{equation}
\|\Lambda^{-N}u\|^2\le Ch^{4N}\|u\|^2+C\abs{\|hP^{\frac{1}{2}}u\|^2-\|u\|^2}. 
\end{equation}
uniformly for all $u\in H_{\frac{1}{2}}$ and all small $h>0$.
\end{lemma}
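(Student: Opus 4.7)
The plan is to apply the spectral theorem to $P$ and reduce the claim to a pointwise inequality on the spectrum. Writing $d\mu_u(\rho):=d\|E_\rho u\|^2$ for the scalar spectral measure of $u$, the three norms in the statement become spectral integrals
\begin{equation*}
\|\Lambda^{-N}u\|^2=\int_0^\infty(1+\rho^2)^{-2N}\,d\mu_u,\qquad \|u\|^2=\int_0^\infty d\mu_u,\qquad \|hP^{\frac{1}{2}}u\|^2-\|u\|^2=\int_0^\infty(h^2\rho^2-1)\,d\mu_u.
\end{equation*}
Thus it suffices to dominate the weight $(1+\rho^2)^{-2N}$ by the pair $h^{4N}$ and $|h^2\rho^2-1|$ uniformly in $\rho\ge 0$ for small $h$.

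The heart of the argument is the pointwise bound
\begin{equation*}
(1+\rho^2)^{-2N}\le C\bigl(h^{4N}+|h^2\rho^2-1|\bigr),\qquad \rho\ge 0,\ 0<h<h_0,
\end{equation*}
which I would prove by a three-region split keyed to the characteristic set $\{h\rho=1\}$ of $h^2P-1$. In the elliptic regions $h^2\rho^2\le 1/2$ and $h^2\rho^2\ge 2$ the factor $|h^2\rho^2-1|$ is bounded below by a positive constant while $(1+\rho^2)^{-2N}\le 1$, so the inequality is immediate. In the transition band $1/2\le h^2\rho^2\le 2$ one has $\rho^2\ge(2h^2)^{-1}$, whence $(1+\rho^2)^{-2N}\le(2h^2)^{2N}\le 4^N h^{4N}$, which is absorbed into the first term.

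Integrating this pointwise bound against $d\mu_u$ yields
\begin{equation*}
\|\Lambda^{-N}u\|^2\le Ch^{4N}\|u\|^2+C\int_0^\infty|h^2\rho^2-1|\,d\mu_u.
\end{equation*}
The main obstacle is then passing from $\int|h^2\rho^2-1|\,d\mu_u$ to the signed quantity $|\int(h^2\rho^2-1)\,d\mu_u|=\abs{\|hP^{\frac12}u\|^2-\|u\|^2}$ appearing in the statement, since the former can strictly exceed the latter when $u$ has both low- and high-frequency components relative to $h^{-1}$. I expect this to require decomposing $u$ via the spectral projectors onto $\{h^2P\le 1\}$ and $\{h^2P>1\}$ and applying the pointwise bound separately on each piece, where $h^2\rho^2-1$ has constant sign and so the absolute-value integral and signed integral agree; any residual mismatch coming from the transition band $\{h^2P\in[1/2,2]\}$ is already swept into the leading $Ch^{4N}\|u\|^2$ term by the previous step.
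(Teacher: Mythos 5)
Your split into spectral regions and the pointwise bound $(1+\rho^2)^{-2N}\le C\bigl(h^{4N}+\abs{h^2\rho^2-1}\bigr)$ match the paper's argument, and both are correct as far as they go. Integrating against $d\mu_u=\langle dE_\rho u,u\rangle$ gives
\begin{equation}
\|\Lambda^{-N}u\|^2\le Ch^{4N}\|u\|^2+C\int_0^\infty\abs{h^2\rho^2-1}\,d\mu_u,
\end{equation}
and you correctly put your finger on the remaining step: passing from $\int\abs{h^2\rho^2-1}\,d\mu_u$ to the signed quantity $\bigl|\int(h^2\rho^2-1)\,d\mu_u\bigr|=\bigl|\|hP^{1/2}u\|^2-\|u\|^2\bigr|$. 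These genuinely differ for mixed-frequency $u$; the paper's own proof performs this substitution silently, without justification.

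Your proposed repair, however, does not close the gap. Splitting $u=u_-+u_+$ by the spectral projectors onto $\{h^2P\le1\}$ and $\{h^2P>1\}$ makes the absolute and signed integrals agree \emph{on each piece}, but the right-hand side of the lemma is stated for the full $u$, and the two pieces cancel in the signed quantity: if $\|hP^{1/2}u_-\|^2-\|u_-\|^2=-a$ and $\|hP^{1/2}u_+\|^2-\|u_+\|^2=+a$, then $\|hP^{1/2}u\|^2-\|u\|^2=0$ even though $\int\abs{h^2\rho^2-1}\,d\mu_u=2a$. The discrepancy is driven by the low-versus-high elliptic regions, not by the transition band, so it is not absorbed by the $Ch^{4N}\|u\|^2$ term as you suggest. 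In fact, the inequality as stated cannot hold uniformly for all $u\in H_{\frac{1}{2}}$ and all small $h$: with $e_0$ a fixed low-frequency eigenvector of $P$ (eigenvalue $\rho_0^2$) and $e_k$ an eigenvector with $\rho_k^2>h^{-2}$, take $u=e_0+c\,e_k$ with $c^2=(1-h^2\rho_0^2)/(h^2\rho_k^2-1)>0$, so that $\|hP^{1/2}u\|^2=\|u\|^2$. Choosing $h=\sqrt 2/\rho_k\to0$ gives $c^2\to1$; the right-hand side then reduces to $Ch^{4N}\|u\|^2\to0$, while the left-hand side is bounded below by $(1+\rho_0^2)^{-2N}>0$. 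A correct statement should keep $\int\abs{h^2\rho^2-1}\,d\mu_u$ (equivalently $\langle\abs{h^2P-1}u,u\rangle$) on the right, and one must then re-examine how this weaker bound feeds into Lemma \ref{p:ucp} and Proposition \ref{p:semires}.
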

\begin{proof}
Consider that
\begin{equation}
\|\Lambda^{-N}u\|^2=\int_{\abs{h^2\rho^2-1}<\frac{1}{2}}(1+\rho^2)^{-2N}~\langle dE_\rho u, u\rangle +\int_{\abs{h^2\rho^2-1}\ge\frac{1}{2}}(1+\rho^2)^{-2N}~\langle dE_\rho u, u\rangle.
\end{equation}
We can estimate the second term
\begin{equation}
\int_{\abs{h^2\rho^2-1}\ge\frac{1}{2}}(1+\rho^2)^{-2N}~\langle dE_\rho u, u\rangle\le 2\int_{\abs{h^2\rho^2-1}\ge\frac{1}{2}} \abs{h^2\rho^2-1}~\langle dE_\rho u, u\rangle
\end{equation}
and therefore bound it by $C\abs{\|hP^{\frac{1}{2}}u\|^2-\|u\|^2}$. To bound the first term, 
we note that on $\{\abs{h^2\rho^2-1}<\frac{1}{2}\}$, we have $1+\rho^2>1+\frac{1}{2}h^{-2}$ and thus $(1+\rho^2)^{-2N}<(1+\frac{1}{2}h^{-2})^{-2N}\le Ch^{4N}$ for small $h$. Thus we can estimate the first term
\begin{equation}
\int_{\abs{h^2\rho^2-1}<\frac{1}{2}}(1+\rho^2)^{-2N}~\langle dE_\rho u, u\rangle\le Ch^{4N}\int_{\abs{h^2\rho^2-1}<\frac{1}{2}}~\langle dE_\rho u, u\rangle
\end{equation}
and bound it by $Ch^{4N}\|u\|^2$ as desired. 
\end{proof}

{The following lemma allows us to obtain a high-frequency unique continuation result.
\begin{lemma}[High-frequency unique continuation]\label{p:ucp}
Assume $Q$ is $m$-elliptic. Then 
\begin{equation}
(\img P_h)^\perp=\ker P_h^*=\{0\}
\end{equation}
and $P_h: H_{\frac{1}{2}}\rightarrow H_{-\frac{1}{2}}$ is invertible for all $h$ small. 
\end{lemma}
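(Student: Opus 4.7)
The plan is to prove simultaneously that $\ker P_h = \{0\}$ and $\ker P_h^* = \{0\}$ for all sufficiently small $h$, and then invoke the Fredholm property of $P_h$ from \cite[Lemma 3.4]{kw23} to conclude invertibility. The argument for $\ker P_h^*$ is identical to that for $\ker P_h$ up to the sign of the imaginary part, so I will argue both at once.

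First I would test the eigenvalue equation against $u$. If $u\in H_{\frac{1}{2}}$ satisfies $P_h u = 0$ (or $P_h^* u = 0$), then pairing in the $H_{-\frac{1}{2}}$--$H_{\frac{1}{2}}$ duality yields
\begin{equation}
h^2\|P^{\frac{1}{2}}u\|^2 \mp ih\|Qu\|_Y^2 = \|u\|^2.
\end{equation}
The real and imaginary parts separately give $\|hP^{\frac{1}{2}}u\|^2 = \|u\|^2$ and $Qu = 0$. From $(h^2P - 1)u = \pm ihQ^*Qu = 0$ in $H_{-\frac{1}{2}}$, the spectral measure of $u$ is concentrated at $\rho = h^{-1}$, so $\chi(h^2 P)u = \chi(1)u$ with $\chi(1)>0$.

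Next I would feed these facts into the $m$-ellipticity estimate \eqref{eq:semiell}. Since $Qu=0$, the estimate reduces to
\begin{equation}
|\chi(1)|^2\|u\|^2 = \|\chi(h^2 P)u\|^2 \le o\!\left(\min\{h^{-4N},m(h^{-1})^{-1}h^{-1}\}\right)\|\Lambda^{-N}u\|^2.
\end{equation}
To control the compact error, apply Lemma \ref{p:cpterr}: since $\|hP^{\frac{1}{2}}u\|^2 = \|u\|^2$, the second term there vanishes, giving $\|\Lambda^{-N}u\|^2 \le Ch^{4N}\|u\|^2$. Substituting and using the $h^{-4N}$ branch of the minimum cancels the $h^{4N}$ factor, yielding
\begin{equation}
|\chi(1)|^2\|u\|^2 \le o(1)\|u\|^2
\end{equation}
as $h\to 0^+$. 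Since $\chi(1)>0$, for $h$ small enough this forces $u=0$.

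Thus $\ker P_h = \ker P_h^* = \{0\}$, and since the Fredholm index of $P_h$ is zero (stable under the compact perturbation $-ihQ^*Q$ of the invertible operator $h^2 P - 1$ in the Fredholm setting of \cite[Lemma 3.4]{kw23}), $P_h : H_{\frac{1}{2}}\to H_{-\frac{1}{2}}$ is invertible. The only delicate point is ensuring the compact-error bound from Lemma \ref{p:cpterr} is compatible with the $o(\cdot)$ in the $m$-ellipticity hypothesis; but the hypothesis was calibrated precisely so that the $h^{-4N}$ branch absorbs $h^{4N}$, so the argument is clean.
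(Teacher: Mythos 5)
Your proof is correct and follows essentially the same route as the paper: pair the kernel equation with $u$ to extract $\|hP^{1/2}u\|^2 = \|u\|^2$ and $Qu=0$, deduce $\chi(h^2P)u = \chi(1)u$, feed this into the $m$-ellipticity estimate with the compact error controlled by Lemma \ref{p:cpterr}, and conclude $u=0$ for small $h$ before invoking Fredholmness from \cite[Lemma 3.4]{kw23}. The only inaccuracy is the parenthetical calling $-ihQ^*Q$ a ``compact perturbation'' of $h^2P-1$: the paper emphasises repeatedly that $Q^*Q$ is \emph{not} relatively compact here (this is one of the points of the note), and the Fredholm property of $P_h$ is imported directly from the cited lemma rather than obtained by compact perturbation; since you also cite that lemma, the logic of your argument is unaffected, but the parenthetical justification should be dropped.
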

\begin{proof}
Assume $P_h^* u=(h^2P+ihQ^*Q-1)u=0$ for some $u$. This implies
\begin{equation}
\|hP^{\frac{1}{2}}u\|^2-\|u\|^2=0 , \ \|Qu\|_Y^2=0, \ Qu=0, \ (h^2P-1)u=0. 
\end{equation}
We then have
\begin{equation}
E_{h^{-1}}u=E_{h^{-1}}h^2Pu=u,
\end{equation}
and thus
\begin{equation}
\chi(h^2P)u=\int_{0}^{\infty}\chi(h^2\rho^2)~dE_\rho u=\chi(1) u. 
\end{equation}
Now apply Lemma \ref{p:cpterr} to see \eqref{eq:semiell} reduces to
\begin{equation}
\chi(1)^2\|u\|^2\le o(1)\|u\|^2.
\end{equation}
Thus uniformly for small $h>0$, this implies $u=0$ and $\ker P_h^*=\{0\}$. From \cite[Lemma 3.4]{kw23}, we know $P_h:H_{\frac{1}{2}}\rightarrow H_{-\frac{1}{2}}$ is Fredholm for all $h$ small. Thus $(\img P_h)^{\perp}=\ker P_h^*=\{0\}$ for all $h$ small. 
\end{proof}}

\begin{proof}[Proof of Proposition \ref{p:semires}]
Pair $P_h u$ with $u$ to observe that
\begin{equation}
\langle P_h u, u\rangle=\|hP^{\frac{1}{2}}u\|^2-\|u\|^2-ih\|Qu\|^2, 
\end{equation}
whose real and imaginary parts satisfy
\begin{equation}
\|hP^{\frac{1}{2}}u\|^2-\|u\|^2=\cre\langle P_h u, u\rangle, \ 
h\|Qu\|_Y^2=-\cim\langle P_h u, u\rangle. 
\end{equation}
We can estimate them by
\begin{gather}\label{eq:1}
\abs{\|hP^{\frac{1}{2}}u\|^2-\|u\|^2}\le 4\epsilon^{-2}\|P_hu\|^2+\epsilon^2 \|u\|^2,
\\
{\|Qu\|_Y^2\le 4\epsilon^{-2} h^{-2}\|P_hu\|^2+\epsilon^2 \|u\|^2.}
\end{gather}
The $m$-ellipticity \eqref{eq:semiell} implies
\begin{multline}
\|\chi(h^2P)u\|^2\le Cm^{-1}\|Qu\|_Y^2 +e(h)\|\Lambda^{-N}u\|^2 \\
\le C\epsilon^{-2}m^{-2} h^{-2}\|P_h u\|^2+\epsilon^2 \|u\|^2+{e(h)}\|\Lambda^{-N}u\|^2,
\end{multline}
where $e(h)=o(\min\{h^{-4N}, m(h^{-1})^{-1}h^{-1}\})$ as in \eqref{eq:semiell}. 
We apply Lemma \ref{p:cpterr} to estimate the compact error:
\begin{equation}
{e(h)}\|\Lambda^{-N}u\|^2\le o(1)\|u\|^2+e(h)\abs{\|hP^{\frac{1}{2}}u\|^2-\|u\|^2}\le C\epsilon^{-2}e(h)^2\|P_hu\|^2+\epsilon^2\|u\|,
\end{equation}
{for the last inequality of which we used a variant of \eqref{eq:1}. 
Noting $e(h)^2=o(m^{-2}h^{-2})$} by assumption, we have 
\begin{equation}
\|\chi(h^2P)u\|^2\le  C\epsilon^{-2}m^{-2} h^{-2}\|P_h u\|^2+\epsilon^2 \|u\|^2.
\end{equation}
Now since $\abs{\chi(s)}$ is uniformly bounded from below near $s=1$, the algebraic inequality
\begin{equation}
C^{-1}\le \abs{h^{2}\rho^2-1}+\abs{\chi(h^2\rho^2)}^2
\end{equation}
holds uniformly for all $h>0$, $\rho\in \mathbb{R}$. Thus
\begin{equation}
\|u\|^2=\int_0^{\infty} ~d\langle E_\rho u, u\rangle\le C\int_0^{\infty} \abs{h^2\rho^2-1}+\abs{\chi(h^2\rho^2)}^2~d\langle E_\rho u, u\rangle
\end{equation}
can be estimated by
\begin{equation}
\|u\|^2\le C\left(\abs{\|hP^{\frac{1}{2}}u\|^2-\|u\|^{2}}+\|\chi(h^2P)u\|^2\right)\le C(1+m^{-2}h^{-2})\|P_h u\|^2+\epsilon^2\|u\|^2. 
\end{equation}
We absorb the $\epsilon$-term to observe
\begin{equation}
\|P_h^{-1}\|_{\mathcal{L}(H)}\le C\max \{m^{-1}h^{-1}, 1\},
\end{equation}
as desired. 
\end{proof}

We now move on to prove the lower bound for $P_h^{-1}$. To do so, it is convenient to introduce the semiclassical interpolation spaces $H_s^{h}$. Define the semiclassical scaling operators
\begin{equation}
\Lambda^{s}_h u=\int_0^\infty (1+h^2\rho^2)^{s}\ dE_\rho(u),
\end{equation}
and interpolation spaces $H_s^h=\Lambda^{-s}_h(H)$ equipped with the norm $\|\cdot\|_{H_{s}^h}=\|\Lambda^s_h \cdot\|_{H}$. 
\begin{lemma}[Norm equivalence]\label{p:equiv}
For $s\ge 0$ and any $\chi$ Borel measurable on $(0,\infty)$ whose support is away from $0$, there exists $C>0$ such that
\begin{gather}
C^{-1}h^{2s}\|\Lambda^s u\|\le \|\Lambda^s_h u\|\le C\|\Lambda^s u\|,\\
\|\Lambda^s_h \chi(h^2P) u\|\le C h^{2s}\|\Lambda^s \chi(h^2P) u\|,\\
{h^{-2s}\|\Lambda^{-s}\chi(h^2P)u\|\le C\|\Lambda_h^{-s}\chi(h^2P)u\|},
\end{gather}
uniformly for all $u\in H_s$ and all $h>0$ small. 
\end{lemma}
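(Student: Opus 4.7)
The plan is to reduce each of the three inequalities to a pointwise comparison between $(1+\rho^2)^s$ and $(1+h^2\rho^2)^s$ on $[0,\infty)$, and then integrate against the scalar spectral measure $d\langle E_\rho u, u\rangle$. The content is elementary but splits into two regimes: part (1) holds for all $\rho$, whereas parts (2) and (3) exploit the hypothesis $\supp \chi \subset [\delta,\infty)$ for some $\delta>0$, which restricts the effective integration domain to the high-frequency set $\{h^2\rho^2\ge \delta\}$ where the two weights $1+h^2\rho^2$ and $h^2(1+\rho^2)$ are comparable.

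For (1), I would observe that, for $h \in (0,1]$, the elementary pointwise inequalities
\begin{equation}
h^{2}(1+\rho^{2}) \;=\; h^{2}+h^{2}\rho^{2} \;\le\; 1+h^{2}\rho^{2} \;\le\; 1+\rho^{2}
\end{equation}
hold uniformly in $\rho\ge 0$. Raising to the $s$th power (using $s\ge 0$) gives $h^{2s}(1+\rho^{2})^{s}\le (1+h^{2}\rho^{2})^{s}\le (1+\rho^{2})^{s}$. Squaring, integrating against $d\langle E_\rho u,u\rangle$, and taking square roots yields the two-sided norm estimate.

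For (2) and (3), fix $\delta>0$ with $\supp\chi\subset[\delta,\infty)$. On $\{h^{2}\rho^{2}\ge \delta\}$ we have the upper bound
\begin{equation}
1+h^{2}\rho^{2} \;\le\; (1+\tfrac{1}{\delta})\,h^{2}\rho^{2} \;\le\; (1+\tfrac{1}{\delta})\,h^{2}(1+\rho^{2}),
\end{equation}
so $(1+h^{2}\rho^{2})^{s}\le C h^{2s}(1+\rho^{2})^{s}$ on the support of $\chi(h^{2}\rho^{2})$. Multiplying by $|\chi(h^{2}\rho^{2})|^{2}$ and integrating against $d\langle E_\rho u,u\rangle$ proves (2). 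For (3), the same pointwise inequality, inverted, reads $h^{-2s}(1+\rho^{2})^{-s}\le C(1+h^{2}\rho^{2})^{-s}$ on $\supp\chi(h^{2}\rho^{2})$; multiplying by $|\chi(h^{2}\rho^{2})|^{2}$ and integrating again yields the claim.

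No real obstacle is expected: the only care needed is that the comparison constants are uniform in $h\in(0,h_{0}]$, and this is automatic because the pointwise estimates above depend only on the fixed quantities $\delta$, $s$ and on $h_{0}\le 1$. The support condition on $\chi$ is precisely what converts the one-sided bound $(1+h^{2}\rho^{2})^{s}\le (1+\rho^{2})^{s}$ of part (1) into the sharper scaling $(1+h^{2}\rho^{2})^{s}\lesssim h^{2s}(1+\rho^{2})^{s}$ used in (2) and (3).
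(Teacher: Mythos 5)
Your proof is correct and follows essentially the same route as the paper: reduce each inequality to a pointwise comparison of the weights $(1+\rho^2)^s$, $(1+h^2\rho^2)^s$ (and their negatives) on $[0,\infty)$, using $h^2(1+\rho^2)\le 1+h^2\rho^2\le 1+\rho^2$ for the two-sided bound in (1), and the sharper comparison $1+h^2\rho^2 \lesssim h^2\rho^2 \lesssim h^2(1+\rho^2)$ valid on $\{h^2\rho^2\ge\delta\}\supset\supp\chi(h^2\cdot)$ for (2) and its inversion for (3), then integrating against $d\langle E_\rho u,u\rangle$. The only difference is that you spell out the role of the threshold $\delta$ explicitly, where the paper records the pointwise inequalities more tersely; the underlying argument is identical.
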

\begin{proof}
It suffices to note the algebraic inequalities
\begin{gather}
C^{-1}h^{2s}(1+\rho^2)^s\le (1+h^2\rho^2)^s\le C(1+\rho^2)^s,\\
(1+h^2\rho^2)^s\chi(h^2\rho^2)\le C (h^2\rho^2)^s\chi(h^2\rho^2)\le Ch^{2s}(1+\rho^2)^s\chi(h^2\rho^2),\\
{h^{-2s}(1+\rho^2)^{-s}\chi(h^2\rho^2)\le C(h^2+h^2\rho^2)^{-s}\chi(h^2\rho^2)\le C(1+h^2\rho^2)^{-s}\chi(h^2\rho^2),}
\end{gather}
for the last two lines of which we used that $\chi$ is supported away from $0$. 
\end{proof}
We will use the following lemma to compare different norms of $P_h^{-1}$: it is a semiclassical version of \cite[Lemma 3.9]{kw23}. 
\begin{lemma}[Operator norm estimate
]\label{p:normineq}
Assume that $P_h : H_{\frac{1}{2}} \to H_{-\frac{1}{2}}$ is invertible for fixed $h > 0$. Then
\begin{equation} 
\|P_h^{-1}\|_{H\rightarrow H_{\frac{1}{2}}^h}\le C(1 +\|P_h^{-1}\|_{\mathcal{L}(H)}).
\end{equation}
Here $C$ does not depend on $h$. 
\end{lemma}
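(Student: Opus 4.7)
The plan is to unfold the definition of the $H_{1/2}^h$-norm and control each piece via the standard energy identity for $P_h$. Fix $f \in H$ and set $u = P_h^{-1} f \in H_{1/2}$. By the spectral definition of $\Lambda_h^{1/2}$, I have
\begin{equation}
\|u\|_{H_{1/2}^h}^2 = \|\Lambda_h^{1/2} u\|_H^2 = \|u\|_H^2 + \|hP^{1/2} u\|_H^2,
\end{equation}
so it suffices to bound the two summands separately by a multiple of $(1+\|P_h^{-1}\|_{\mathcal{L}(H)})^2 \|f\|_H^2$.

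The first summand is trivial: $\|u\|_H \le \|P_h^{-1}\|_{\mathcal{L}(H)} \|f\|_H$ directly. For the second summand I would reuse the pairing identity already exploited in the proof of Proposition \ref{p:semires}. Since $P_h u = f \in H$, the duality bracket $\langle P_h u, u\rangle$ between $H_{-1/2}$ and $H_{1/2}$ coincides with the inner product $\langle f, u\rangle_H$, and thus
\begin{equation}
\|hP^{1/2} u\|_H^2 - \|u\|_H^2 - ih\|Qu\|_Y^2 = \langle f, u\rangle_H.
\end{equation}
Taking real parts wipes out the damping term and gives $\|hP^{1/2} u\|_H^2 = \|u\|_H^2 + \cre\langle f, u\rangle_H$.

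Cauchy--Schwarz and the Young inequality then yield $\|hP^{1/2} u\|_H^2 \le \tfrac{3}{2}\|u\|_H^2 + \tfrac{1}{2}\|f\|_H^2$, so
\begin{equation}
\|u\|_{H_{1/2}^h}^2 \le \tfrac{5}{2}\|u\|_H^2 + \tfrac{1}{2}\|f\|_H^2 \le \tfrac{5}{2}\|P_h^{-1}\|_{\mathcal{L}(H)}^2 \|f\|_H^2 + \tfrac{1}{2}\|f\|_H^2,
\end{equation}
and the claim follows after taking square roots and using $\sqrt{a^2+b^2} \le a+b$ for nonnegative $a,b$. There is no real obstacle here; the only thing to be a bit careful about is the justification of the pairing identity when $u$ is merely in $H_{1/2}$ and $P_h u \in H$, but this is immediate from how $P_h:H_{1/2}\to H_{-1/2}$ is set up and has already been used in the paper, so the proof should be quite short.
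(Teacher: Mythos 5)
Your argument reproduces the paper's own proof: both pair $\langle P_h u, u\rangle$, take the real part to isolate $\|hP^{1/2}u\|^2-\|u\|^2$, and close via Cauchy--Schwarz and Young before bounding $\|u\|_H$ by $\|P_h^{-1}\|_{\mathcal{L}(H)}\|f\|_H$. The only differences are cosmetic (you carry explicit constants $\tfrac{3}{2}$, $\tfrac{5}{2}$ rather than a generic $\epsilon$-absorption), so this is correct and essentially the same route.
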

\begin{proof}
Given any $f\in H_{-\frac{1}{2}}$, there exists a unique $u\in H_{\frac{1}{2}}$ such that $P_h u=f$.
 Pair
\begin{equation}
\langle P_h u, u\rangle=\|hP^{\frac{1}{2}}u\|^2-\|u\|^2-ih\|Qu\|_Y^2,
\end{equation}
whose real part implies
\begin{equation}
\|hP^{\frac{1}{2}}u\|^2\le C\epsilon^{-1}{\|P_h u\|^2}+\|u\|^2+\epsilon {\|u\|^2}. 
\end{equation}
Note that, after absorption of $\epsilon$, we have
\begin{equation}
\|u\|_{H_{\frac{1}{2}}^h}^2=\|u\|^2+\|hP^{\frac{1}{2}}u\|^2\le C\|P_h u\|^2+C\|u\|^2,
\end{equation}
that is
\begin{equation}
\|P_h^{-1}f\|_{H_{\frac{1}{2}}^h}\le C(\|f\|_{H} +\|P_h^{-1}f\|_H),
\end{equation}
yielding the desired operator norm bound. 
\end{proof}

We now prove the lower bound for $P_h^{-1}$.
\begin{proposition}[Resolvent lower bound]\label{p:semilwb}
Let $m$ be a positive continuous function and let $Q$ is $m$-bounded, that is, {for some $\chi\in C^0([0, \infty))$ with $\chi(1)>0$,} there is $C>0$ such that
\begin{equation}\label{eq:semibdd}
\|Q^*Q\chi(h^2P) u\|_{H_{-\frac{1}{2}}^h}\le Cm(h^{-1})\| u\|_{H^h_{\frac{1}{2}}}. 
\end{equation}
uniformly for all $u\in H_{\frac{1}{2}}$ and all real $h\le C^{-1}$. Then there is $C>0$ such that
\begin{equation}\label{eq:2}
\|P_h^{-1}\|_{\mathcal{L}(H)}\ge C^{-1}(m(h^{-1}))^{-1}h^{-1},
\end{equation} 
along some sequence of $h\rightarrow 0$.
\end{proposition}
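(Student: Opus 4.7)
The plan is to construct a quasimode sequence from the eigenfunctions of $P$. Since $P$ is nonnegative self-adjoint with compact resolvent, it admits an orthonormal sequence of eigenvectors $v_k \in H$ with $Pv_k = \rho_k^2 v_k$ and $\rho_k \to \infty$. Set $h_k := 1/\rho_k \to 0$, so that $h_k^2 P v_k = v_k$; by Lemma \ref{p:ucp}, $P_{h_k}$ is invertible for all sufficiently large $k$. Define $u_k := P_{h_k}^{-1} v_k \in H_{\frac{1}{2}}$. The goal is to prove $\|u_k\|_H \ge C^{-1} h_k^{-1} m(h_k^{-1})^{-1}$, which suffices since $\|v_k\|_H = 1$.

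The first step is an exact scalar identity: I pair $P_{h_k} u_k = v_k$ with $v_k$ in the $H_{-\frac{1}{2}}/H_{\frac{1}{2}}$ duality. Transferring $h_k^2 P$ onto the eigenvector by self-adjointness, the contribution $(h_k^2 \rho_k^2 - 1)\langle u_k, v_k\rangle$ of $h_k^2 P - 1$ vanishes exactly, leaving
\[
-ih_k \langle Qu_k, Qv_k\rangle_Y = \langle v_k, v_k\rangle_H = 1,
\quad\text{i.e.,}\quad
\langle Qu_k, Qv_k\rangle_Y = ih_k^{-1}.
\]

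The second step applies the $m$-boundedness \eqref{eq:semibdd} to control $\|Qv_k\|_Y$. Since $\chi(h_k^2 P) v_k = \chi(1) v_k$ with $\chi(1) > 0$, and $\|v_k\|_{H^{h_k}_{\frac{1}{2}}} = (1 + h_k^2 \rho_k^2)^{1/2} = \sqrt{2}$, duality gives
\[
\|Qv_k\|_Y^2 = \langle Q^*Q v_k, v_k\rangle \le \|Q^*Q v_k\|_{H^{h_k}_{-\frac{1}{2}}} \|v_k\|_{H^{h_k}_{\frac{1}{2}}} \le C m(h_k^{-1}).
\]
Cauchy--Schwarz applied to the identity from the first step then yields $\|Qu_k\|_Y \ge C^{-1} h_k^{-1} m(h_k^{-1})^{-1/2}$.

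Finally, I pair $P_{h_k} u_k = v_k$ with $u_k$ itself. As in the proof of Proposition \ref{p:semires}, the imaginary part of $\langle P_{h_k} u_k, u_k\rangle$ equals $-h_k \|Qu_k\|_Y^2$, so
\[
h_k \|Qu_k\|_Y^2 = |\cim \langle v_k, u_k\rangle| \le \|u_k\|_H.
\]
Substituting the lower bound on $\|Qu_k\|_Y$ gives $\|u_k\|_H \ge h_k \|Qu_k\|_Y^2 \ge C^{-2} h_k^{-1} m(h_k^{-1})^{-1}$, which proves \eqref{eq:2} along $h_k \to 0$. The main technical subtlety is keeping the pairings rigorous: $Pu_k$ and $Q^*Qu_k$ live only in $H_{-\frac{1}{2}}$, so every inner product above must be read as the $H_{-\frac{1}{2}}/H_{\frac{1}{2}}$ duality bracket, which collapses to the $H$-inner product whenever one argument is $v_k$ (spectrally smooth, hence in every $H_s$). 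Everything else is algebra driven by the eigenvalue condition $h_k^2 \rho_k^2 = 1$.
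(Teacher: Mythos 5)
Your construction is a genuinely different route from the paper's. The paper takes the eigenvector $u_h$ itself as test function, bounds $\|P_h^*u_h\|_{H^h_{-1/2}}$ via $m$-boundedness, passes to the inverse by duality, and finishes with the norm-comparison Lemma \ref{p:normineq}. You instead set $u_k = P_{h_k}^{-1}v_k$ and estimate $\|u_k\|_H$ from below directly through two pairings: first $\langle P_{h_k}u_k, v_k\rangle = 1$, which collapses to $-ih_k\langle Qu_k, Qv_k\rangle_Y$ because the eigenvector annihilates $h_k^2P-1$ exactly, then $\cim\langle P_{h_k}u_k, u_k\rangle = -h_k\|Qu_k\|_Y^2$. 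Combined with $\|Qv_k\|_Y^2 \lesssim m(h_k^{-1})$ from the $m$-boundedness and Cauchy--Schwarz, this gives the lower bound without ever invoking Lemma \ref{p:normineq}. That is a real simplification: your argument is more self-contained and avoids the adjoint--dual bookkeeping. Both proofs hinge on the same structural fact, namely that on an eigenvector $\chi(h^2P)v_k = \chi(1)v_k$ so the $m$-boundedness applies exactly at the characteristic level.

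There is, however, one genuine gap. You invoke Lemma \ref{p:ucp} to assert that $P_{h_k}$ is invertible for large $k$, but that lemma assumes $Q$ is $m$-\emph{elliptic}, whereas Proposition \ref{p:semilwb} only assumes $Q$ is $m$-\emph{bounded}; a bounded damping with no lower bound need not force unique continuation, so Lemma \ref{p:ucp} is not available here. The repair is exactly the dichotomy the paper uses at the start of its proof: if $P_h : H_{1/2}\to H_{-1/2}$ fails to be invertible along a sequence $h\to 0$, then \eqref{eq:2} holds trivially (via \cite[Lemmata 3.6 and 3.9]{kw23}); otherwise $P_h$ is invertible for all small $h$, in particular for the $h_k = \rho_k^{-1}$, and your argument applies unchanged. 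With that substitution your proof is correct.
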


\begin{proof}
{If $P_h:H_{\frac{1}{2}}\rightarrow H_{-\frac{1}{2}}$ fails to be invertible on a sequence of $h\rightarrow 0$, then \eqref{eq:2} trivially holds due to \cite[Lemmata 3.6 and 3.9]{kw23}}. Without loss of generality we assume $P_h$ is invertible for all $h$ small. Since $P$ has compact resolvent, $P$ is unbounded and has discrete spectrum. There then exists a sequence of nontrivial $u_h\in H_{\frac{1}{2}}$ as $h\rightarrow 0$, such that $(h^2P-1)u_h=0$. Let $\chi\in C_c^\infty((0,\infty))$ be a cutoff with $\chi(1)=1$. Now we have $u_h=\chi(h^2P)u_h$ and
\begin{equation}
\|P_h^* u_h\|_{H^h_{-\frac{1}{2}}}=h\|Q^*Q\chi(h^2P)u_h\|_{H^h_{-\frac{1}{2}}}\le Cmh\|u_h\|_{H^h_{\frac{1}{2}}}\le Cmh\|u_h\|. 
\end{equation}
Recalling that $H_{\frac{1}{2}}^h$ and $H_{-\frac{1}{2}}^h$ are in duality, we then have
\begin{equation}
\|P_h^{-1}\|_{H\rightarrow H_{\frac{1}{2}}^h}=\|(P_h^*)^{-1}\|_{H_{-\frac{1}{2}}^h\rightarrow H}\ge C^{-1}m^{-1} h^{-1}.
\end{equation}
Apply the norm inequality in Lemma \ref{p:normineq} while noting $m$ is bounded from below to see
\begin{equation}
\|P_{h}^{-1}\|_{\mathcal{L}(H)}\ge C^{-1}m^{-1} h^{-1}
\end{equation}
as desired. 
\end{proof}

\begin{proof}[Proof of Theorems \ref{t:decay} and \ref{t:sharp}]
It remains to convert those resolvent estimates to upper and lower bounds for $\|(\mathcal{A}+i\lambda)^{-1}\|$. 
 We have the following characterisation from \cite[Lemma 3.7]{kw23}: for any $m$ bounded from above, 
\begin{equation}\label{l1}
\|P_h^{-1}\|_{\mathcal{L}(H)}\le C (m(h^{-1}))^{-1}h^{-1}
\end{equation}
uniformly for $h$ small is equivalent to 
\begin{equation}
\|(\mathcal{A}+i\lambda)^{-1}\|_{\mathcal{L}(\mathcal{H})}\le \frac{C}{m(\lambda)}, 
\end{equation}
as desired. Apply \cite[Lemma 3.9]{kw23} to obtain the lower bound of $(\mathcal{A}+i\lambda)^{-1}$.
\end{proof}

{\begin{proof}[Proof of Corollary \ref{t3}]
Theorem \ref{t:decay} already gives bounds on $(\Ac+i\lambda)^{-1}$. With \cite[Lemma 3.10]{kw23}, it remains to show that the unique continuation property holds to conclude the energy decay. That is, we want to show for any $\lambda>0$, $(P-\lambda^2)u=0$ implies $Qu\neq0$. When $(P-\lambda^2)u=0$, \cite[Lemma 3.3(4)]{kw23} implies $\Lambda^s u\neq 0$. Now \eqref{eq:4} implies $Qu\neq 0$ as desired. 
\end{proof}}

We conclude the note with a lemma to conveniently turn classical ellipticity and boundedness of $Q$ into weak ellipticity \eqref{eq:semiell} and weak boundedness \eqref{eq:semibdd}.
\begin{theorem}[Classical estimates]\label{p:classell}
{Let $h=\lambda^{-1}$.} The following are true:
\begin{enumerate}[wide]
\item If $Q$ is classically elliptic with respect to $\Lambda^s$ for $s\in (-\infty, \frac{1}{2}]$, that is, there are $C, N>0$ such that $-N<s$ and 
\begin{equation}
\|\Lambda^s u\|^2\le C\|Qu\|_{Y}^2+C\|\Lambda^{-N}u\|^2,
\end{equation}
for all $u\in H_{\frac{1}{2}}$. Then $Q$ is ${\lambda^{4s}}$-elliptic: for some $\chi\in C^\infty([0,\infty))$ with $\chi(1)>0$, 
\begin{equation}
{h^{-4s}}\|\chi(h^{2}P) u\|^2\le C\|Qu\|_Y^{2}+C\|\Lambda^{-N}u\|^2.
\end{equation}
Note here the compact error is of desired size {$C=o(h^{-4s-4N})=o(h^{-1})$}.
\item If $Q$ is bounded by $\Lambda^{s}$ for $s\in [0, \frac{1}{2}]$, that is, there is $C>0$ such that
\begin{equation}
\|Q u\|_Y^2\le \|\Lambda^{s}u\|_H^2
\end{equation}
for all $u\in H_{s}$. Then $Q$ is {$\lambda^{4s}$}-bounded, that is,
\begin{equation}
\|\Lambda_h^{-\frac{1}{2}}Q^*Q\chi(h^{2}P)u\|\le {h^{-4s}}C\|\Lambda_h^{\frac{1}{2}}u\|. 
\end{equation}

\item If $Q$ is bounded by $\Lambda^s$ for $s\in (-\infty, 0)$, then $Q$ is {$\lambda^{2s}$}-bounded. If additionally $Q^*Q$ is \emph{microlocal}, in the sense that there are cutoffs $\phi, \psi\in C^\infty[0,\infty)$ with $\phi(s)\equiv 1$ near $s=0$, $\phi(s)\equiv 1$ near $s=1$, $\phi\psi=0$ such that for all $h$ small and all $u\in H$, 
\begin{equation}\label{eq:microlocal}
\|\phi(h^2P)Q^*Q\psi(h^2P) u\|_H\le Ch^{-4s}\|u\|_H .
\end{equation}
Then $Q$ is $\lambda^{4s}$-bounded.

\item When $s\in(-\infty, 0)$, if $Q^*Q$ is bounded by $\Lambda^{2s}$ on $H_{2s}$, that is
\begin{equation}
\|Q^*Qu\|_{H}\le C\|\Lambda^{2s} u\|_H 
\end{equation}
uniformly for $u\in H_{2s}$. {Then $Q^*Q$ is microlocal as in (3), and} $Q$ is {$\lambda^{4s}$}-bounded. 
\end{enumerate}
\end{theorem}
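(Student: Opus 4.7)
My overall strategy is that all four parts reduce to elementary spectral calculus once one exploits two facts: (a) $\Lambda^s$ commutes with $\chi(h^2 P)$, since both are functions of $P$; and (b) on the support of $\chi$, where $h^2 \rho^2$ is localized near $1$, one has $(1+\rho^2)^s \asymp h^{-2s}$, so $\Lambda^s$ acts like a scalar multiple of $h^{-2s}$ on spectrally localized vectors. These observations make the argument essentially scalar on each spectral cell.

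For part (1), I would use commutativity to write $\Lambda^s \chi(h^2 P) u = \chi(h^2 P) \Lambda^s u$ and bound this by $\sup|\chi| \cdot \|\Lambda^s u\|$, which by the classical ellipticity hypothesis is at most $C\|Qu\|_Y + C\|\Lambda^{-N} u\|$. The spectral localization then yields $\|\chi(h^2 P) u\|^2 \le C h^{4s} \|\Lambda^s \chi(h^2 P) u\|^2$, which combined with the previous bound gives the $\lambda^{4s}$-elliptic estimate; finally I would verify that the size of the compact error is indeed $o(h^{-1})$ using $-N < s \le 1/2$.

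For parts (2) and the first claim of (3), I would use the dual description $\|w\|_{H_{-1/2}^h} = \sup_{\|\Lambda_h^{1/2} v\|=1} |\langle w, v\rangle|$ and pair
\begin{equation}
\langle Q^*Q \chi(h^2 P) u, v\rangle = \langle Q \chi(h^2 P) u, Qv\rangle,
\end{equation}
bounding each factor by $\|\Lambda^s \cdot\|_H$ via the hypothesis. Converting back to semiclassical norms through Lemma \ref{p:equiv} produces $h^{-4s}$ for $s\in [0, 1/2]$ and $h^{-2s}$ for $s<0$. For the improved $\lambda^{4s}$-bound in (3), I would split $v = \phi(h^2 P) v + (1-\phi(h^2 P)) v$: the first piece is handled by moving $\phi(h^2 P)$ across the pairing and invoking \eqref{eq:microlocal} directly, while the second piece is spectrally supported where $h^2 \rho^2$ is bounded below, giving an extra factor $h^{-2s}$ that, combined with the analogous factor coming from $Q\chi(h^2 P) u$, yields the required $h^{-4s}$. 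Part (4) is the simplest: the hypothesis $\|Q^*Q u\| \le C\|\Lambda^{2s} u\|$ applied to $\psi(h^2 P) u$ and coupled with the spectral localization of $\psi$ away from zero (where $(1+\rho^2)^{2s} \le C h^{-4s}$ for $s<0$) immediately gives the microlocal bound, after which (3) supplies $\lambda^{4s}$-boundedness.

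The main bookkeeping obstacle is in part (3), where I must carefully track the interplay between the classical weight $\Lambda^s$ with $s<0$ and the semiclassical weight $\Lambda_h^{1/2}$ on the dual side, and verify that the two terms arising from the decomposition of $v$ combine to exactly the prescribed exponent $h^{-4s}$. Lemma \ref{p:equiv} as stated handles only $s\ge 0$, so for negative $s$ I would argue directly from the pointwise inequalities $(1+\rho^2)^s \le C(1+h^2\rho^2)^s h^{-2s}$ on the relevant spectral regions, keeping the bound functional-calculus driven throughout.
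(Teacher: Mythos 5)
Your proposal is correct and follows essentially the same route as the paper's: both reduce to the pointwise spectral equivalence $(1+\rho^2)^s \asymp h^{-2s}$ on the support of a cutoff near $h^2\rho^2=1$, combined with the semiclassical norm comparisons of Lemma~\ref{p:equiv}. Your duality pairing $\langle Q^*Q\chi(h^2P)u, v\rangle = \langle Q\chi(h^2P)u, Qv\rangle$ in parts (2)--(3) is simply the bilinear-form phrasing of the paper's operator estimate $\|\Lambda^{-s}Q^*Qu\|\le C\|\Lambda^s u\|$, and your decomposition $v=\phi(h^2P)v+(1-\phi)(h^2P)v$ mirrors the paper's splitting $\Lambda_h^{-1/2}=\Lambda_h^{-1/2}\phi(h^2P)+\Lambda_h^{-1/2}(1-\phi)(h^2P)$ for the microlocal improvement.
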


\begin{proof}
{We again use the semiclassicalisation $h=\lambda^{-1}$}.

1. Assume $Q$ is classically elliptic with respect to $\Lambda^{s}$. When $0\le s\le \frac{1}{2}$, we have for large $\lambda$, that
\begin{equation}
\|(h^2P)^s\|^2\le Ch^{4s}\|\Lambda^{s} u\|^2\le Ch^{4s}\|Q u\|^2_{Y}+Ch^{4s}\|\Lambda^{-N} u\|^2
\end{equation}
as desired. When $s<0$, consider a nonnegative cutoff $\psi\in C^\infty_c((0,\infty))$ with $\psi(1)=1$. Then from Lemma \ref{p:equiv}
\begin{equation}
\|\psi(h^2 P) \Lambda_h^su\|^2\le Ch^{4s}\|\Lambda^s u\|^2\le Ch^{4s}\|Q u\|^2_{Y}+Ch^{4s}\|\Lambda^{-N} u\|^2
\end{equation}
as desired. 

2. Assume $Q$ is bounded by $\Lambda^s$. Note then $\Lambda^{-s}Q^*: Y\rightarrow H$ is bounded and we have
\begin{equation}
\|\Lambda^{-s}Q^*Qu\|\le C\|\Lambda^s u\|.
\end{equation}

2a. When $0\le s\le \frac{1}{2}$, with Lemma \ref{p:equiv}, we have 
\begin{equation}
\|\Lambda_h^{-\frac{1}{2}}Q^*Q u\|\le C\|\Lambda_h^{-s}Q^*Q u\|\le Ch^{-2s}\|\Lambda^{-s} Q^*Q u\|\le Ch^{-2s}\|\Lambda^s u\|\le Ch^{-4s}\|\Lambda_h^s u\|
\end{equation}
as desired. 

2b. When $s<0$, consider a nonnegative cutoff $\psi\in C_c^\infty((0,\infty))$ with $\psi(1)=1$. Note $Q^*: Y\rightarrow H$ is bounded and we have with Lemma \ref{p:equiv},
\begin{equation}
\|\Lambda_h^{-\frac{1}{2}}Q^*Q\psi(h^2P) u\|\le C\|Q^*Q \psi(h^2P)u\|\le C\|\Lambda^s \psi(h^2P)u\|\le C h^{-2s}\|\Lambda^s_h u\|
\end{equation}
showing that $Q$ is $\lambda^{2s}$-bounded as desired. 

2c. When $s<0$, assume additionally there are cutoffs $\phi, \psi\in C^0([0,\infty))$ with $\phi(s)\equiv 1$ near $s=0$, $\psi(s)\equiv 1$ near $s=1$, $\phi\psi=0$ such that for all $h$ small and all $u\in H$,
\begin{equation}
\|\phi(h^2P)Q^*Q\psi(h^2P) u\|_H\le Ch^{-4s}\|u\|_H.
\end{equation}
Now consider
\begin{equation}
\Lambda_h^{-\frac{1}{2}}Q^*Q\psi(h^2P) u=\Lambda_h^{-\frac{1}{2}}(1-\phi)(h^2P)Q^*Q\psi(h^2P) u+\Lambda_h^{-\frac{1}{2}}\phi(h^2P)Q^*Q\psi(h^2P) u,
\end{equation}
the second term has the size $Ch^{-4s}\|u\|$ from the assumption. We can estimate the first term. Note $1-\phi$ is supported away from $0$. Apply Lemma \ref{p:equiv} to see
\begin{multline}
\|\Lambda_h^{-\frac{1}{2}}(1-\phi)(h^2P)Q^*Q\psi(h^2P) u\|\le \|\Lambda_h^{-s}(1-\phi)(h^2P)Q^*Q\psi(h^2P) u\|\\
\le Ch^{-2s}\|\Lambda^{-s}(1-\phi)(h^2P)Q^*Q\psi(h^2P) u\|\le Ch^{-2s}\|Q\psi(h^2P) u\|_Y\\
\le Ch^{-2s}\|\Lambda^s\psi(h^2P) u\|
\le Ch^{-4s}\|u\|. 
\end{multline}
We then have
\begin{equation}
\|\Lambda_h^{-\frac{1}{2}}Q^*Q\psi(h^2P) u\|\le Ch^{-4s}\|u\|
\end{equation}
yielding the desired $\lambda^{4s}$-boundedness. 

3. Assume $s<0$ and $Q^*Q$ is bounded by $\Lambda^{2s}$. Pick any cutoffs $\phi,\psi$ as described in Step 2c. Then
\begin{equation}
\|\phi(h^2P)Q^*Q\psi(h^2P) u\|\le \|Q^*Q\psi(h^2P) u\|\le C\|\Lambda^{2s}\psi(h^{2}P)u\|\le Ch^{-4s}\|u\|
\end{equation}
uniformly for all $H$ as desired. Thus $Q^*Q$ is microlocal and we apply (3) to see $Q$ is $\lambda^{4s}$-bounded. 
\end{proof}

\begin{remark}
In the case $s < 0$, Theorem \ref{p:classell}(3) may be suboptimal without the microlocality of $Q^*Q$. The microlocality \eqref{eq:microlocal} forbids the communication between zero sections $\{\rho=0\}$ and semiclassical characteristics $\{\rho= h^{-1}\}$. Note that if $Q^*Q=f(P)$ is indeed within the functional calculus of $P$, then $Q^*Q$ is automatically microlocal:
\begin{equation}\label{eq:3}
\phi(h^2P)Q^*Q\psi(h^2P)u=\int_0^{\infty} \phi(\rho^2)f(\rho^2)\psi(\rho^2)~dE_\rho u=0,
\end{equation}
whenever $\phi\psi=0$. When $P$ is a self-adjoint nonnegative pseudodifferential operator of positive order on a compact smooth manifold without boundary, and $Q^*Q$ is the multiplication by a smooth function, the microlocality also holds with the right of \eqref{eq:3} replaced by $h^{\infty}$: see \cite[equation {(E.2.5)}]{dz19}. 
In practice, it is easier to check the classical boundedness of $Q^*Q$ and use Theorem \ref{p:classell}(4) instead: see Examples \ref{eg:2}, \ref{p:egpdo}.
\end{remark}

\bibliographystyle{alpha}
\bibliography{Robib,addbib}

\end{document}